\newtheorem{thm}{Theorem}[section]
\newtheorem{lem}[thm]{Lemma}
\newtheorem{prop}[thm]{Proposition}
\newtheorem{cor}[thm]{Corollary}
\newtheorem*{nota}{Notation}
\title[Approximate identity and approximation properties]{Approximate identity and approximation properties of multidimensional Fourier algebras}
\author{Kanupriya}
\address{Kanupriya,\newline\indent Department of Mathematics,\newline\indent Indian Institute of Technology Delhi,\newline\indent New Delhi - 110016, India.}
\email{kanupriyawadhawan3@gmail.com}
\author{N. Shravan Kumar}
\address{N. Shravan Kumar,\newline\indent Department of Mathematics,\newline\indent Indian Institute of Technology Delhi,\newline\indent New Delhi - 110016, India.}
\email{shravankumar.nageswaran@gmail.com}
\begin{document}
\begin{abstract}
    For a locally compact group $G$, let $A^n(G)$ denote the multidimensional Fourier algebra given by $ \otimes_{n}^{eh} A(G).$ This work explores the approximation identity and operator amenability of the algebra $A^n(G)$. Further, we study the approximation properties (AP) and the concept of weak amenability of the multidimensional Fourier algebra. 
\end{abstract}

\maketitle

\section{Introduction}

The notion of amenability originated while studying groups and the related Banach algebras and has expanded to other areas such as functional analysis and operator algebras. Operator amenability is used in functional analysis, specifically while studying several operator algebras. This concept is useful in many areas of mathematics, including representation theory and non-commutative geometry. 

The Fourier algebra $A(G)$ of a locally compact group $G$ is an important subject of study in abstract harmonic analysis. It captures the group's algebraic structure using the Fourier transform. Further, the amenability properties of Fourier algebra provide information on the structure of the group and the associated operator algebras. Moreover, the operator amenability of $A(G)$ studies whether there are any bounded linear operators on $A(G)$ that behave as "amenable" in the sense of operator spaces.

A decade ago, the multidimensional Fourier algebra $A^n(G)$ appeared for a locally compact group $G$ in \cite{TT}, where the authors studied the multipliers of multidimensional Fourier algebra. The idea was motivated by the fact that for $n=2$ and $G$ is abelian, the Fourier transform of $L^1(G^2)$ corresponds to the space of bimeasures developed by Graham and Schreiber \cite{GS}. In this paper, we study the amenability and approximation properties of multidimensional Fourier algebra.

In 1972, B.E. Johnson \cite{J} established the notion of amenable Banach algebras. He proved that a locally compact group $G$ is amenable if and only if corresponding group algebra $L^1(G)$ is an amenable Banach algebra. Later in \cite{J2}, the author attempted to bring the concept of amenability into Fourier algebra, $A(G).$ However, the relationship between the amenability of $A(G)$ and $G$ was not adequate: there were compact groups $G$, such as $G = SO(3)$, for which $A(G)$ was not amenable. In the context of operator spaces, Z. J. Ruan \cite{Rua}  in 1995 formally introduced the concept of operator amenability of completely contractive Banach algebra. This discovery shed light on the link between group amenability and operator amenability of the Fourier algebra. The author proved that a group $G$ is amenable if and only if its Fourier algebra $A(G)$ is operator amenable. Further, the notion of operator amenability was studied for Fourier Stieltjes algebra by N. Spronk \cite{Sp}.

This paper focuses on examining the operator amenability of the multidimensional Fourier algebra and establishing a result similar to Z. J. Ruan's. Specifically, we show that a group G is amenable if and only if its multidimensional Fourier algebra $A^n(G)$ is operator amenable. The proofs are a consequence of some fundamental characteristics of tensor products.

As an extension of amenability, in 1989, Haagerup and Cowling \cite{HC} developed the notion of weak amenability for groups. If there is a net of finitely supported functions on a group G that uniformly approximates the constant function 1 on compact subsets of G, then the group is said to be weakly amenable. Further, in the next years, the notion of weak amenability is defined in terms of their geometric or algebraic characteristics. The study of the group's weak amenability in terms of multidimensional Fourier algebra is covered in Section 5 of this work.

A weaker notion than weak amenability is the existence of approximation properties (AP). The study of approximation properties of $A(G)$ explores whether $A(G)$ has certain 'nice' properties, such as being a dual space or having an approximate identity. Grothendieck \cite{Gr} established the foundation for studying the approximation properties of Banach spaces.

A key result by Eymard \cite{Eym1} shows that if G is an amenable group, then $A(G)$ has the approximation property (AP), i.e., $A(G)$ is a dual Banach space. The amenability of $G$ ensures that every bounded linear operator on $A(G)$ can be approximated by finite-rank operators. For operator spaces, Effros and Ruan \cite{ER1990} and for locally compact groups, Haagerup and Kraus \cite{HK} presented and analyzed other versions of Grothendieck's approximation properties. There is a close connection between these versions of the approximation property. In \cite{HK}, for instance, it was demonstrated that for a discrete group $G$ to possess the approximation property of Haagerup and Kraus, it must also possess the operator space approximation property of Effros and Ruan in the case of its reduced group $C^*$-algebra $C^{*}_{\rho}(G)$. Also, as stated by Junge and Ruan \cite{jun}, the approximation property is known to be true for discrete groups G if and only if their Fourier algebra $A(G)$ also satisfies the operator space approximation property. Further, T. Miao \cite{Miao2} studied operator space approximation properties of Fig\'{a}-Talamanca-Herz algebra $A_{p}(G)$. In Section 4 of the present study, we examine the approximation properties (AP) of the multidimensional Fourier algebra $A^n(G)$.

We now begin with some of the necessary preliminaries that are required in the sequel.

\section{Notations and Preliminaries}

    \subsection{Operator spaces and their tensor products}
        We first review some basics of operator spaces. As tensor products will play a major role, our main aim of this section is to collect the required preliminaries on these topics.

        Let $X$ be a linear space. The space of all $n\times n$ matrices with entries from the space $X$ will be denoted as $M_n(X).$ An {\it operator space} is a complex vector space $X$ together with an assignment of a norm $\|\cdot\|_n$ on the matrix space $M_n(X),$ for each $n\in\mathbb{N},$ such that
        \begin{enumerate}[(i)]
            \item $\|x\oplus y\|_{m+n}=max\{\|x\|_m,\|y\|_n\}$ and
            \item $\|\alpha x\beta\|_n\leq\|\alpha\|\|x\|_m\|\beta\|$
        \end{enumerate}
        for all $x\in M_m(X),$ $y\in M_n(X),$ $\alpha\in M_{n,m}$ and $\beta\in M_{m,n}.$ 

        Let $X$ and $Y$ be operator spaces and let $\varphi:X \rightarrow Y$ be a linear transformation. For any $n\in\mathbb{N},$ the {\it $n^{th}$-amplification} of $\varphi,$ denoted $\varphi_n,$ is defined as a linear transformation $\varphi_n:M_n(X) \rightarrow M_n(Y)$ given by $\varphi_n([x_{ij}]):=[\varphi(x_{ij})].$ The linear transformation $\varphi$  is said to be {\it completely bounded} if $\sup\{\|\varphi_n\|:n\in\mathbb{N}\}<\infty.$ We shall denote by $\mathcal{CB}(X,Y)$ the space of all completely bounded linear mappings from $X$ to $Y$ equipped with the norm, denoted $\|\cdot\|_{cb},$ $$\|\varphi\|_{cb}:=\sup\{\|\varphi_n\|:n\in\mathbb{N}\},\ \varphi\in\mathcal{CB}(X,Y).$$ We shall say that $\varphi$ is a {\it complete isometry (complete contraction)} if $\varphi_n$ is an isometry (a contraction) for each $n\in \mathbb{N}.$

        By Ruan's theorem, given an abstract operator space $X$ there exists a Hilbert space $\mathcal{H}$ and a closed subspace $Y\subseteq\mathcal{B}(\mathcal{H})$ such that $X$ and $Y$ are completely isometric. 

        For operator spaces $X$ and $Y$, {\it operator space Haagerup tensor norm} of $u\in M_n(X\otimes Y)$ is given by $$\|u\|_h=\inf\left\{ \|x\|\|y\|:u=x\odot y,x\in M_{n,r}(X), y\in M_{r,n}(Y),r\in\mathbb{N} \right\}.$$ 
        Here $x \odot y$ denotes the inner matrix product of $x$ and $y$ defined as $(x \odot y)_{i,j} = \underset{k=1}{\overset{r}{\sum}} x_{i,k} \otimes y_{k,j},$ where $x \odot y \in M_{n}(X \otimes Y).$ For more details, we refer to \cite[Chapter 9]{ER3}. The quantity $\|\cdot\|_h$ is an operator space norm and the resulting operator space will be denoted by $X\otimes^h Y$. In the case when $X$ and $Y$ are C*-algebras, for $n=1,$ the Haagerup norm can be written as follows. For $u\in X\otimes^h Y,$ $$\|u\|_h=\inf \left\{ \left\| \underset{n\in\mathbb{N}}{\sum} x_n x_n^\ast \right\|_X^{1/2} \left\| \underset{n\in\mathbb{N}}{\sum} y_n^\ast y_n \right\|_Y^{1/2} :u= \underset{n\in\mathbb{N}} {\sum}x_n\otimes y_n \right\}.$$ 
            
        The {\it extended Haagerup tensor product} of $X$ and $Y,$ denoted $X\otimes^{eh} Y$ is defined as the space of all normal multiplicatively bounded functionals on $X^\ast\times Y^\ast.$ By \cite{ER2}, $(X\otimes^h Y)^\ast$ and $X^\ast\otimes^{eh}Y^\ast$ are completely isometric. Given dual operator spaces $X^\ast$ and $Y^\ast,$ the {\it $\sigma$-Haagerup tensor product (or normal Haagerup tensor product)} is defined by $$X^\ast\otimes^{\sigma h}Y^\ast=(X\otimes^{eh}Y)^\ast.$$ Further, the following inclusions hold completely isometrically: $$X^\ast\otimes^h Y^\ast \hookrightarrow X^\ast\otimes^{eh} Y^\ast \hookrightarrow X^\ast\otimes^{\sigma h} Y^\ast.$$

       \begin{nota}
            The n-tensor product of a space X is denoted by $$\otimes_{n}X=\underbrace{ X \otimes X \otimes\cdots\otimes X}_n.$$ Now, we define $\otimes_{n}^h X,$ $\otimes_{n}^{eh} X,$ and $\otimes_{n}^{\sigma h} X$ as the completion of $\otimes_{n}X$ with respect to their corresponding norms.
        \end{nota}

        For more on tensor products, we refer to \cite{ER1, ER2}. For further details on operator spaces, the reader can refer to \cite{ER3} or \cite{Pis}.

\subsection{Fourier algebra and its multidimensional version}
    Let $G$ be a locally compact group and fix a left Haar measure $dx$ on $G$. Let $\lambda_G$ denotes the left regular representation of $G$ acting on Hilbert space $L^2(G)$ by left translations: $\lambda_G(s)f(t) = f(s^{-1}t)$ for some $s,t \in G$ and $f \in L^2(G).$ The group von Neumann algebra $VN(G)$ is the smallest self adjoint subalgebra in $\mathcal{B}(L^2(G))$ that contains all $\lambda_G(s)$ for all $s \in G$ and is closed in weak operator topology. The Fourier algebra $A(G)$ is actually the predual of $VN(G)$ and every function $u$ in $A(G)$ is represented by $u(x) = \langle \lambda_G(x)f,g \rangle$ with $\|u\|_{A(G)} = \|f\|_2\|g\|_2.$ The duality between $A(G)$ and $VN(G)$ is given by $\langle T,u \rangle = \langle Tf,g \rangle.$ For more details, one can refer to Eymard \cite{Eym1}. The group algebra $L^1(G)$ is an involutive Banach algebra and its enveloping algebra is denoted by $C^*(G)$. The closure of the left regular representation of $L^1(G)$ in the operator norm is known as reduced $C^*$-algebra which is denoted by $C_r^*(G)$. We shall denote the group  $G\times G\times\cdots\times G\ (n\mbox{-times})$ by $G^n$.
 
    The multidimensional version of Fourier algebra denoted by $A^n(G)$, is actually defined as collection of all functions $f\in L^{\infty}(G^n)$ such that a normal completely bounded multilinear functions $\Phi$ on $\underbrace{VN(G) \times VN(G) \times \cdots \times VN(G)}_n$ satisfying $f(x_1,x_2,...,x_n)= \Phi(\lambda(x_1),...,\lambda(x_n))$ From \cite{TT}, $A^n(G)$ coincides with $\otimes_n^{eh} A(G).$ Also, the dual of $A^n(G)$ is completely isometrically isomorphic to $VN^n(G):= \otimes_n^{\sigma h}VN(G)$. See \cite{TT} for more details.
 \subsection{Multiplier and Completely bounded Multiplier}
 Let $\mathcal{A}$ be a commutative regular semi-simple completely contractive Banach algebra with Gelfand spectrum $\Delta(\mathcal{A});$ consequently, $\mathcal{A}$ may be seen as a subalgebra of the algebra $C_0(\Delta(\mathcal{A}))$ of all continuous functions that vanish at infinity. A continuous function $b: \Delta(\mathcal{A}) \rightarrow \mathbb{C}$ is considered a multiplier of $\mathcal{A}$ if $b\mathcal{A} \subseteq \mathcal{A}$. This results in a well-defined bounded linear map $m_b$ on $\mathcal{A}$, defined as $m_b(a) = ba$.Further, if the map $m_b$ is completely bounded, then $b$ is considered a completely bounded multiplier. We denote $M\mathcal{A}$ (or $M^{cb}\mathcal{A}$) as the space of all multipliers (or completely bounded multipliers) of $\mathcal{A}$. A bounded linear map $T: \mathcal{A} \rightarrow \mathcal{A}$ is defined as $T = m_b$ for any $b \in M\mathcal{A}$ if and only if $T(x)y = xT(y)\ \forall\ x, y \in \mathcal{A}$ (see \cite[Proposition 2.2.16]{K1}).  If $b \in M^{cb}\mathcal{A}$, we denote the completely bounded norm of $m_b$ as $\|.\|_{cbm}$. The functions $b \in M^{cb}\mathcal{A}$ are commonly associated with the corresponding linear transformations $m_b$. To learn about multipliers of the Fourier algebra, one can refer to \cite{KL2}.
 
 \subsection{Amenability and approximation properties}
 A locally compact group $G$ is said to be amenable if there exists a $M^1(G)$- invariant mean $M$ on $L^{\infty}(G)$, i.e., $M(\mu \ast f) = M(f)\ \forall\ \mu \in M^1(G)$ and $f\in L^{\infty}(G),$ where $M^1(G)$ denotes the convex set formed by probability measures on $G.$ One can read about characterizations of amenable groups and Fourier algebra in \cite{P}.
 Let $\mathcal{A}$ be a completely contractive Banach algebra, and let $X$ be an operator space that is also a $\mathcal{A}$-module. If both the left and right module multiplication maps, denoted as $m_l: \mathcal{A} \otimes X \rightarrow X$ and $m_r: X \otimes \mathcal{A} \rightarrow X$, can be extended to complete contractions $m_l: \mathcal{A} \hat{\otimes} X \rightarrow X$ and $m_r: X \hat{\otimes} \mathcal{A} \rightarrow X$, then $X$ is referred to as a completely contractive $\mathcal{A}$-module. A derivation is a linear map $D: \mathcal{A} \rightarrow X$ which satisfies $D(ab) = a.D(b) + D(a).b\ \forall\ a, b \in \mathcal{A}$. It can be observed that $X^*$ is also a completely contractive $\mathcal{A}$-module if $X$ is one. $\mathcal{A}$ is called operator amenable if every completely bounded derivation $D : \mathcal{A} \rightarrow X$, where $X$ is a completely contractive $\mathcal{A}$-module, is inner (i.e. $D(a) = a.f -f.a$ for some $f \in X$). It is a well-known result that $A(G)$ is operator amenable if and only if $G$ is amenable, see \cite[Theorem 4.2.7]{KL2}. In section 4 of this article, we discuss the operator amenability of multidimensional Fourier algebra $A^n(G).$ To know more about the approximation property for locally compact groups, one can refer to \cite[Page-683]{HK}.

\section{Approximate identity and Amenability}

    In this section, we prove the existence of approximate identity. Further, we show some equivalent conditions related to amenability.

   Let us begin by proving the necessary and sufficient condition for the existence of approximate identity in $A^n(G)$.
    \begin{thm}\label{ABAI}
        The algebra $A^n(G)$ possesses a bounded approximate identity if and only if $G$ is amenable.
    \end{thm}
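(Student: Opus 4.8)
The plan is to reduce the statement to the classical theorem of Leptin, which asserts that $A(G)$ admits a bounded approximate identity if and only if $G$ is amenable, and then to transport approximate identities between $A(G)$ and $A^n(G)$ by elementary tensor--product constructions. Throughout I would use that the extended Haagerup norm is a (sub)cross norm, so that $\|v_1\otimes\cdots\otimes v_n\|_{eh}\le \|v_1\|_{A(G)}\cdots\|v_n\|_{A(G)}$, and that, with the definition of $A^n(G)=\otimes_n^{eh}A(G)$ adopted above as a completion, the algebraic tensor product $\otimes_n A(G)$ is dense in $A^n(G)$.

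For the sufficiency, Leptin's theorem supplies a bounded approximate identity $(e_\alpha)$ in $A(G)$ with $\|e_\alpha\|_{A(G)}\le C$ for all $\alpha$. I would then set $E_\alpha:=e_\alpha\otimes\cdots\otimes e_\alpha\in A^n(G)$, which is bounded by $C^n$ via the cross--norm estimate. On an elementary tensor $v_1\otimes\cdots\otimes v_n$ one writes $E_\alpha(v_1\otimes\cdots\otimes v_n)-v_1\otimes\cdots\otimes v_n$ as the telescoping sum
\[
\sum_{k=1}^{n}(e_\alpha v_1)\otimes\cdots\otimes(e_\alpha v_{k-1})\otimes(e_\alpha v_k-v_k)\otimes v_{k+1}\otimes\cdots\otimes v_n,
\]
and estimates the $k$-th summand by $C^{k-1}\big(\prod_{j\neq k}\|v_j\|_{A(G)}\big)\|e_\alpha v_k-v_k\|_{A(G)}$, which tends to $0$ since $(e_\alpha)$ is an approximate identity for $A(G)$. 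As $(E_\alpha)$ is uniformly bounded and acts approximately as the identity on the dense subalgebra $\otimes_n A(G)$, a routine three--epsilon argument upgrades this to all of $A^n(G)$, producing a bounded approximate identity there.

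For the necessity, I would fix points $s_2,\dots,s_n\in G$ and consider the slice map $S:=\mathrm{id}\otimes\lambda(s_2)\otimes\cdots\otimes\lambda(s_n)$, where $\lambda(s_i)\in VN(G)=A(G)^\ast$ is evaluation at $s_i$, so that $\langle\lambda(s_i),v\rangle=v(s_i)$. On the first Haagerup factor this is the identity, hence $S$ sends $A^n(G)$ completely boundedly into $A(G)$ with $S(v_1\otimes\cdots\otimes v_n)=\big(\prod_{i=2}^{n}v_i(s_i)\big)v_1$; indeed slice maps are bounded on the extended Haagerup tensor product with norm controlled by $\prod_i\|\lambda(s_i)\|=1$. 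Because each $\lambda(s_i)$ is multiplicative on $A(G)$, the map $S$ is an algebra homomorphism, and it is plainly onto $A(G)$. Consequently, if $(u_\gamma)$ is a bounded approximate identity for $A^n(G)$, then $(S(u_\gamma))$ is uniformly bounded and satisfies $S(u_\gamma)\,S(w)=S(u_\gamma w)\to S(w)$ for every $w\in A^n(G)$; surjectivity of $S$ then makes $(S(u_\gamma))$ a bounded approximate identity for $A(G)$, and Leptin's theorem forces $G$ to be amenable.

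The steps that need the most care, and which I regard as the crux, are the two places where the extended Haagerup structure interacts with the algebra operations: that the evaluation functionals $\lambda(s_i)$ induce genuinely bounded slice maps $A^n(G)\to A(G)$, so that restriction to a coordinate line really lands in $A(G)$; and that the approximate identity built on elementary tensors extends to all of $A^n(G)$. Both rest on the cross--norm property and on the boundedness of slice maps for $\otimes^{eh}$; once these are secured, the argument is simply a transfer of Leptin's theorem through the tensor construction.
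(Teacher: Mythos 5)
Your argument is correct and follows essentially the same route as the paper: for sufficiency you tensor a Leptin approximate identity of $A(G)$ with itself $n$ times and use density of the algebraic tensor product, and for necessity you slice $A^n(G)$ onto $A(G)$ with evaluation characters in all but one coordinate (the paper's map $\mathrm{id}\otimes\xi_t\otimes\cdots\otimes\xi_t$) and invoke Leptin again. Your telescoping-sum estimate and the explicit surjectivity argument for the slice map merely supply details the paper leaves implicit.
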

    \begin{proof}
        Suppose that $A^n(G)$ possesses a bounded approximate identity, say $(u_\alpha)_\alpha.$ For $t\in G,$ let $\xi_t$ denote the corresponding character on $A^n(G).$ Then the map $id\otimes\xi_t\otimes\ldots\otimes\xi_t:A^n(G)\rightarrow A(G)$ is a completely contractive homomorphism. Let $0\neq\phi\in A(G)$ and $t_0\in G$ such that $\phi(t_0)\neq 0.$ Note that 
        \begin{align*}
            & [(id\otimes\xi_t\otimes\ldots\otimes\xi_t)(u_\alpha)]\phi \\ =& [(id\otimes\xi_t\otimes\ldots\otimes\xi_t)(u_\alpha)(id\otimes\xi_t\otimes\ldots\otimes\xi_t)](\phi\otimes \phi(t)^{-1}\phi\otimes\ldots\otimes \phi(t)^{-1}\phi) \\ =& (id\otimes\xi_t\otimes\ldots\otimes\xi_t)(u_\alpha(\phi\otimes \phi(t)^{-1}\phi\otimes\ldots\otimes \phi(t)^{-1}\phi)) \\ \rightarrow & (id\otimes\xi_t\otimes\ldots\otimes\xi_t)(\phi\otimes \phi(t)^{-1}\phi\otimes\ldots\otimes \phi(t)^{-1}\phi) = \phi.
        \end{align*}
        This implies that $A(G)$ has a bounded approximate unit. Thus, $G$ is amenable by \cite[Pg. 96, Theorem 10.4]{P}.

        Conversely, suppose that $G$ is amenable. Then by \cite{Lep}, $A(G)$ has a bounded  approximate identity, say $(\phi_\alpha)_\alpha.$ Let $u_\alpha=\phi_\alpha\otimes\ldots\otimes\phi_\alpha.$ if $\psi_1,\psi_2,\ldots,\psi_n\in A(G),$ then $$u_\alpha(\psi_1\otimes\psi_2\otimes\ldots\otimes\psi_n)\rightarrow \psi_1\otimes\psi_2\otimes\ldots\otimes\psi_n.$$ As $A(G)\otimes A(G)\otimes\ldots\otimes A(G)$ is dense in $A^n(G),$ $u_\alpha u\rightarrow u$ $\forall\ u\in A^n(G),$ i.e., the net $(u_\alpha)_\alpha$ forms a bounded approximate identity for $A^n(G).$
    \end{proof}
    We now prove the equivalence between the underlying group $G$ and operator amenability of $A^n(G)$.
    \begin{thm}
        Let $G$ be a locally compact group. Then $A^n(G)$ is the operator amenable if and only if $G$ is amenable.
    \end{thm}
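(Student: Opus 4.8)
The plan is to prove both implications through the approximate-diagonal characterization of operator amenability: a completely contractive Banach algebra $\mathcal{A}$ is operator amenable if and only if it admits a bounded approximate diagonal, that is, a bounded net $(d_\alpha)$ in $\mathcal{A}\hat{\otimes}\mathcal{A}$ satisfying $a\cdot d_\alpha-d_\alpha\cdot a\to 0$ and $\pi(d_\alpha)\,a\to a$ for every $a\in\mathcal{A}$, where $\pi$ denotes the multiplication map. For the forward implication I would first recall that every operator amenable algebra has a bounded approximate identity, obtained as the $\pi$-image of a bounded approximate diagonal. Hence if $A^n(G)$ is operator amenable it possesses a bounded approximate identity, and Theorem \ref{ABAI} immediately gives that $G$ is amenable. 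An equivalent route is to note that $id\otimes\xi_t\otimes\cdots\otimes\xi_t\colon A^n(G)\to A(G)$ is a completely contractive homomorphism with dense range; since operator amenability passes to such images, $A(G)$ would be operator amenable and $G$ amenable by Ruan's theorem.

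For the converse, assume $G$ is amenable, so that $A(G)$ is operator amenable by Ruan's theorem and therefore carries a bounded approximate diagonal $(e_\beta)\subseteq A(G)\hat{\otimes}A(G)$. From $e_\beta$ I would build a candidate diagonal for $A^n(G)=\otimes_n^{eh}A(G)$ by tensoring $n$ copies of $e_\beta$ and shuffling the tensor legs. Viewing $e_\beta\otimes\cdots\otimes e_\beta$ in $\hat{\otimes}_{2n}A(G)$, the $k$-th copy of $e_\beta$ occupies legs $2k-1$ and $2k$; I then set $D_\beta=\Psi(e_\beta\otimes\cdots\otimes e_\beta)$, where $\Psi$ permutes the $2n$ legs into the pattern $(1,3,\dots,2n-1\mid 2,4,\dots,2n)$ and relaxes the projective norm to the extended Haagerup norm inside each of the two resulting groups, so that $D_\beta\in A^n(G)\hat{\otimes}A^n(G)$. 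A direct computation gives $\pi(D_\beta)=\pi(e_\beta)\otimes\cdots\otimes\pi(e_\beta)$, whence the augmentation condition $\pi(D_\beta)\,u\to u$ follows from $\pi(e_\beta)\phi\to\phi$ in $A(G)$ together with uniform boundedness of the legwise multiplier actions; and for an elementary tensor $u=\phi_1\otimes\cdots\otimes\phi_n$ the commutator $u\cdot D_\beta-D_\beta\cdot u$ decomposes into finitely many terms, each carrying a factor $\phi_j\cdot e_\beta-e_\beta\cdot\phi_j\to 0$, and is therefore null.

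The crux is verifying that $\Psi$ is completely bounded, so that all convergence statements for $(e_\beta)$ transfer to $(D_\beta)$. This splits into two pieces: the leg permutation is a complete isometry because the operator projective tensor product is commutative and associative, while the passage from $\hat{\otimes}$ to $\otimes^{eh}$ inside each group is a complete contraction, since on the algebraic tensor product the identity factors as $A(G)\hat{\otimes}A(G)\to A(G)\otimes^{h}A(G)\hookrightarrow A(G)\otimes^{eh}A(G)$, the first map a complete contraction and the second a complete isometry; tensoring these two complete contractions projectively keeps $\Psi$ a complete contraction. Once $\|\Psi\|_{cb}\le 1$ is established, $(D_\beta)$ is bounded by $\sup_\beta\|e_\beta\|^n$ and is a genuine bounded approximate diagonal, yielding operator amenability of $A^n(G)$.

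I expect the main obstacle to be exactly the bookkeeping showing that $\Psi$ intertwines the $A^n(G)$-bimodule actions with the legwise $A(G)$-actions on the diagonals, since this is what licenses the term-by-term estimates above. An alternative that isolates this difficulty in a single binary statement is to prove the lemma that $\mathcal{A}\otimes^{eh}\mathcal{B}$ is operator amenable whenever $\mathcal{A}$ and $\mathcal{B}$ are, and then to induct via $A^n(G)=A(G)\otimes^{eh}A^{n-1}(G)$; the inductive hypothesis supplies an approximate diagonal for $A^{n-1}(G)$ and the binary shuffle argument is the special case $n=2$ of the construction above.
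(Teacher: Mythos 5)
Your proof is correct. The direction ``operator amenable $\Rightarrow$ $G$ amenable'' is the paper's argument verbatim: extract a bounded approximate identity from operator amenability (\cite[Proposition 2.3]{Rua}) and apply Theorem \ref{ABAI}; your alternative via the dense-range homomorphism $id\otimes\xi_t\otimes\cdots\otimes\xi_t$ also works. For the converse, both arguments hinge on the same fact --- the operator space projective norm dominates the extended Haagerup norm, so the identity on $A(G)\otimes\cdots\otimes A(G)$ extends to a complete contraction $A(G^n)\cong A(G)\widehat{\otimes}\cdots\widehat{\otimes}A(G)\to A^n(G)$ with dense range --- but they package it differently. The paper applies Ruan's theorem to the amenable group $G^n$ and then cites \cite[Theorem 7.2]{LNR} to transfer operator amenability along that complete contraction. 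You instead construct a bounded approximate diagonal for $A^n(G)$ by hand, tensoring $n$ copies of a diagonal for $A(G)$ and shuffling legs; your map $\Psi$ is exactly the canonical identification of $A(G)\widehat{\otimes}\cdots\widehat{\otimes}A(G)$ ($2n$ factors) with $(\widehat{\otimes}_n A(G))\widehat{\otimes}(\widehat{\otimes}_n A(G))$ followed by the above contraction tensored with itself, so in effect you are unwinding the proof of the transfer theorem together with the standard fact that projective tensor products of operator amenable algebras are operator amenable. The paper's route is shorter; yours is self-contained and makes visible where the norm comparison and the module-action intertwining enter. Two points to make explicit in a write-up: you need $A^n(G)$ to be a completely contractive Banach algebra for the approximate-diagonal characterization to apply (the paper cites \cite[Proposition 4.2]{TT}), and the commutator estimate should be telescoped into $n$ terms, each carrying exactly one factor $\phi_j\cdot e_\beta-e_\beta\cdot\phi_j$ and each controlled because $\Psi$ is a complete contraction intertwining the legwise $A(G)$-actions with the $A^n(G)$-bimodule actions on elementary tensors; this is the bookkeeping you flag, and it closes exactly as you predict.
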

    \begin{proof}
        Suppose that the group $G$ is amenable, then the algebra 
        $$A(G^n)\cong A(G) \widehat{\otimes} A(G) \widehat{\otimes}\ldots \widehat{\otimes} A(G)$$ is operator amenable. Now, consider the identity mapping 
        $$\iota: A(G)\otimes A(G)\otimes \ldots A(G) \rightarrow A(G)\otimes A(G)\otimes \ldots \otimes A(G).$$ If we equip the space on the left hand side with the operator space projective tensor norm and the space on the right hand side by the extended Haagerup norm, then the map $\iota$ is completely contractive and hence extends to a complete contraction from $A(G^n)$ to $A^n(G).$ Also, note that the image is dense in $A^n(G).$ Thus by \cite[Theorem 7.2]{LNR}, $A^n(G)$ is operator amenable.

        Conversely, suppose that $A^n(G)$ is operator amenable. By \cite[Proposition 4.2]{TT}, the space $A^n(G)$ is a completely contractive Banach algebra and hence by \cite[Proposition 2.3]{Rua}, $A^n(G)$ has a bounded approximate identity. Thus by Theorem \ref{ABAI}, the conclusion follows.
    \end{proof}
    Here is the final theorem of this section. It gives the condition for the amenability of $A^n(G)$ in terms of the open subgroup of $G$.    
    \begin{thm}
        Let $G$ be a locally compact group and $H$ be an open subgroup of $G$ with finite index. Then $A^n(G)$ is amenable iff $A^n(H)$ is amenable.
    \end{thm}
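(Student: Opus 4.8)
The plan is to decompose $A^n(G)$ into a finite algebra direct sum of copies of $A^n(H)$, after which the equivalence falls out of the standard hereditary behaviour of amenability. Write $m=[G:H]<\infty$ and fix coset representatives $g_1=e,g_2,\ldots,g_m$, so that $G=\bigsqcup_{i=1}^m g_iH$. Since $H$ is open (hence closed), each characteristic function $1_{g_iH}$ is a continuous positive-definite function, so $1_{g_iH}\in B(G)\subseteq M^{cb}A(G)$ with $\|1_{g_iH}\|_{cbm}=1$; moreover $1_{g_iH}1_{g_jH}=\delta_{ij}1_{g_iH}$ and $\sum_i 1_{g_iH}=1$. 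For a multi-index $\vec{i}=(i_1,\ldots,i_n)\in\{1,\ldots,m\}^n$ put $C_{\vec{i}}=g_{i_1}H\times\cdots\times g_{i_n}H$, so that $G^n=\bigsqcup_{\vec{i}}C_{\vec{i}}$ is a disjoint union of $m^n$ sets. Using the functoriality of the extended Haagerup tensor product with respect to completely bounded maps (see \cite{ER2}), the operator $m_{1_{g_{i_1}H}}\otimes\cdots\otimes m_{1_{g_{i_n}H}}$ is completely bounded on $A^n(G)=\otimes_n^{eh}A(G)$, and checking on elementary tensors shows it is precisely multiplication by $1_{C_{\vec{i}}}=1_{g_{i_1}H}\otimes\cdots\otimes 1_{g_{i_n}H}$. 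Hence $1_{C_{\vec{i}}}\in M^{cb}A^n(G)$, and the family $\{1_{C_{\vec{i}}}\}_{\vec{i}}$ consists of orthogonal idempotents summing to $1$.

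Next I would record the decomposition. Set $J_{\vec{i}}=1_{C_{\vec{i}}}A^n(G)$, the closed ideal of functions supported on $C_{\vec{i}}$. Because the $C_{\vec{i}}$ are pairwise disjoint and multiplication in $A^n(G)$ is pointwise on $G^n$, we have $J_{\vec{i}}J_{\vec{j}}=\{0\}$ for $\vec{i}\neq\vec{j}$, and the bounded projections $f\mapsto 1_{C_{\vec{i}}}f$ exhibit a topological algebra direct sum $A^n(G)=\bigoplus_{\vec{i}}J_{\vec{i}}$. It then remains to identify each summand with $A^n(H)$. For the base coset $H^n$, extension by zero $A(H)\hookrightarrow A(G)$ is a complete isometry onto $1_HA(G)$ (valid precisely because $H$ is open), and by the same tensorial functoriality it induces a complete isometry $A^n(H)=\otimes_n^{eh}A(H)\cong\otimes_n^{eh}\big(1_HA(G)\big)=1_{H^n}A^n(G)=J_{(1,\ldots,1)}$. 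For a general $\vec{i}$, translation by $(g_{i_1}^{-1},\ldots,g_{i_n}^{-1})$ is a completely isometric algebra automorphism of $A^n(G)$ carrying $J_{\vec{i}}$ onto $J_{(1,\ldots,1)}$; thus $J_{\vec{i}}\cong A^n(H)$ as Banach algebras for every $\vec{i}$.

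Finally, $A^n(G)\cong\bigoplus_{\vec{i}}A^n(H)$ is a direct sum of $m^n$ mutually annihilating copies of $A^n(H)$. Since each summand is simultaneously a quotient of $A^n(G)$ (via the idempotent projection) and a direct factor, amenability passes in both directions: a finite algebra direct sum is amenable if and only if each of its summands is. This yields that $A^n(G)$ is amenable if and only if $A^n(H)$ is. The hard part, and the step I expect to be the main obstacle, is the tensorial functoriality used twice above: one must verify carefully that $\otimes_n^{eh}$ preserves the completely isometric, completely complemented inclusion $A(H)=1_HA(G)\hookrightarrow A(G)$ together with the accompanying idempotent multipliers, so that $\otimes_n^{eh}\big(1_HA(G)\big)$ genuinely coincides with the ideal $1_{H^n}A^n(G)$ sitting inside $A^n(G)=\otimes_n^{eh}A(G)$ (cf. \cite{TT}). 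Once this identification is secured, the remainder of the argument is routine.
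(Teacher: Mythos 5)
Your proof is correct and follows essentially the same route as the paper: both directions rest on the coset decomposition of $G^n$ over $H^n$ together with the standard hereditary properties of amenability for quotients and finite direct sums (the paper realizes $A^n(H)$ as a quotient of $A^n(G)$ via the extension/restriction pair $E^n$, $R^n$, and $A^n(G)$ as a quotient of $\bigoplus_i A^n(H)$ via translated extensions, which is exactly your direct-sum decomposition read as two quotient maps). One cosmetic point: for $i\neq 1$ the translate $1_{g_iH}$ need not be positive-definite, but it is still a norm-one element of $B(G)\subseteq M^{cb}A(G)$, which is all your argument actually uses.
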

\begin{proof}
    Let $A^n(G)$ be amenable. Let $E$ denote the natural extension map from $A(H)$ to $A(G)$ which is a complete isometry. Observe that the map $E^n= E\otimes E \otimes \ldots \otimes E: A^n(H) \rightarrow A^n(G)$ is again a complete isometry. Let $R$ denote the natural restriction map from $A(G)$ to $A(H)$ which is a complete contraction. Also, see that $R^n = R\otimes R \otimes \ldots \otimes R: A^n(G) \rightarrow A^n(H)$ is a complete contraction. As $E^n \circ R^n = \text{id}$, it shows that $R^n$ is an onto homomorphism and $A^n(H)= A^n(G)/\text{Ker} R^n$. Now, amenability of $A^n(H)$ follows from \cite[Corollary 2.3.2]{Run}.
    
    Conversely, let $A^n(H)$ be amenable. As it is given that $H$ is an open subgroup of finite index, then $H^n$ is also an open subgroup of finite index of $G^n$.
    Now, choose the  $e,x_1,x_2,\ldots,x_m$ for the left cosets of $H^n$ in $G^n$. Define a map $$\Psi: \underset{i=1}{\overset{m}{\oplus}} A^n(H) \rightarrow A^n(G)$$ as $$\Psi\left(\underset{i=1}{\overset{m}{\oplus}} u_i\right) = \underset{i=1}{\overset{m}{\oplus}} L_{x_i} u_i.$$ The functions $L_{x_i} u_i$ are supported on disjoint open sets of $G^n$, so the map $\Psi$ is a continuous homomorphism which is clearly surjective. Now, the amenability of $A^n(G)$ follows from the fact that $A^n(G)$ is a quotient of $A^n(H)$.
\end{proof}
Here are some of the corollaries of the above theorems.
\begin{cor}
    Suppose $G$ has an abelian subgroup of finite index, then $A^n(G)$ is amenable.
\end{cor}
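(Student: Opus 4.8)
The plan is to reduce, via the preceding theorem on open subgroups of finite index, to the case of an abelian group, and then exploit the fact that amenability is inherited by the range of a bounded homomorphism with dense image. First I would arrange for the abelian subgroup to be open. If $H$ is an abelian subgroup of $G$ of finite index, its closure $\overline{H}$ is again a subgroup, is abelian (a limit of commuting nets commutes), and satisfies $[G:\overline{H}]\le[G:H]<\infty$; a closed subgroup of finite index is automatically open, since its complement is a finite union of closed cosets. Replacing $H$ by $\overline{H}$, I may assume $H$ is an open abelian subgroup of finite index, so that by the preceding theorem $A^n(G)$ is amenable if and only if $A^n(H)$ is amenable. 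It therefore suffices to prove that $A^n(H)$ is amenable.

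To treat the abelian group $H$, I would first observe that $H^n$ is abelian, whence $A(H^n)\cong L^1(\widehat{H^n})$ isometrically as Banach algebras; since $\widehat{H^n}$ is abelian and hence amenable, Johnson's theorem \cite{J} shows that $A(H^n)$ is an amenable Banach algebra. Next, exactly as in the proof of the operator amenability theorem, I would consider the identity map on the algebraic tensor product $A(H)\otimes\cdots\otimes A(H)$, regarded as a map $\iota: A(H^n)\to A^n(H)$ from the operator space projective tensor norm to the extended Haagerup norm. Because the projective norm dominates the extended Haagerup norm, $\iota$ is completely contractive; it is multiplicative for the pointwise products and its range, containing the algebraic tensor product, is dense in $A^n(H)$. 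Invoking the standard hereditary property that amenability passes to the range of a continuous homomorphism with dense image \cite{Run}, I conclude that $A^n(H)$ is amenable, and the preceding theorem then yields amenability of $A^n(G)$.

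The main obstacle I anticipate is conceptual rather than computational: one must resist identifying $A^n(H)$ with $A(H^n)$, since even for abelian $H$ these differ (for $n=2$ the former is the larger bimeasure algebra noted in the introduction). The argument consequently uses no isomorphism, only the density and complete contractivity of $\iota$ together with the behaviour of amenability under dense-range homomorphisms; the one point requiring genuine care is verifying that $\iota$ is a \emph{homomorphism with dense range}, and not merely a bounded linear map, so that \cite{Run} applies.
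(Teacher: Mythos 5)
Your proof is correct and follows essentially the same route as the paper: reduce via the preceding open--finite-index theorem to the abelian subgroup $H$, observe that $A(H^n)\cong L^1(\widehat{H^n})$ is amenable by Johnson's theorem, and transfer amenability through the completely contractive, dense-range homomorphism from the projective tensor product into $A^n(H)$. Your preliminary step of replacing $H$ by its closure (and checking the closure is still abelian, of finite index, and hence open) is a detail the paper glosses over by simply taking $H$ closed, but otherwise the two arguments coincide.
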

\begin{proof}
Let $H$ be the closed abelian subgroup of $G$ having finite index , then by \cite[Theorem 4.5.5]{KL2}, $L^1(\hat{H})$ will be amenable. And we already know that for the abelian $H$, we have $$A^n(H) = \otimes_{n}^{eh}L^1(\hat{H}).$$ Since, the embedding of Max $L^1(\hat{H} \times \hat{H} \times \ldots \times \hat{H})$ inside $\otimes_{n}^{eh}L^1(\hat{H})$
is a complete contraction and $L^1(\hat{H} \times \hat{H} \times \ldots \times \hat{H})$ is amenable, it implies that $\otimes_{n}^{eh}L^1(\hat{H})$ is amenable. Therefore, the amenability of $A^n(G)$ will be obtained using the previous theorem since a closed subgroup with a finite index is open.
\end{proof}
Here is the statement that describes when is a weak$^*$ space, a complemented set in terms of the amenable group.
    \begin{cor}
        Let $G$ be a locally compact group and let $X$ be a weak*-closed $A^n(G)$-submodule of $VN^n(G).$ Then $G$ is amenable if and only if the following statements are equivalent:
        \begin{enumerate}[a)]
            \item The space $X$ is invariantly complemented
            \item The space $^\perp X$ has a bounded approximate identity.
        \end{enumerate} 
    \end{cor}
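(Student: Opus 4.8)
The plan is to recognize this corollary as a structural statement about module complementation paired with approximate identities, which is the operator-space analogue of a classical characterization due to Forrest for the ordinary Fourier algebra. The statement asserts that the equivalence (a) $\Leftrightarrow$ (b) holds for every weak*-closed submodule $X$ precisely when $G$ is amenable. The natural strategy is to prove the two directions separately, leveraging the duality between $A^n(G)$ and $VN^n(G)$ together with the operator amenability result established in the previous theorem.

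The plan is to prove the two directions of the outer equivalence separately, using the duality $VN^n(G) = A^n(G)^\ast$ together with the characterizations already established. For the direction asserting that the equivalence of (a) and (b) forces $G$ to be amenable, I would apply the hypothesis to the single submodule $X = \{0\}$. Here ${}^\perp X = A^n(G)$, and $X = \{0\}$ is trivially invariantly complemented by the zero projection, so (a) holds automatically; the assumed equivalence then forces (b), namely that $A^n(G)$ itself carries a bounded approximate identity. By Theorem \ref{ABAI} this is equivalent to the amenability of $G$, which closes this direction. The content therefore lies entirely in the converse: assuming $G$ amenable (so that $A^n(G)$ is operator amenable and, again by Theorem \ref{ABAI}, admits a bounded approximate identity $(u_\alpha)_\alpha$), I must establish the equivalence of (a) and (b) for every weak$^\ast$-closed submodule $X$.

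For the implication (a) $\Rightarrow$ (b), I would exploit commutativity of $A^n(G)$. If $X$ is invariantly complemented by a weak$^\ast$-continuous module projection $P\colon VN^n(G)\to X$, its preadjoint $\phi\colon A^n(G)\to A^n(G)$ is a completely bounded idempotent module homomorphism, hence a multiplier, with $\ker\phi = {}^\perp X$. Setting $\psi = \mathrm{id}-\phi$ gives an idempotent module map with range ${}^\perp X$ on which it acts as the identity. I would then check that $\big(\psi(u_\alpha)\big)_\alpha$ is the desired bounded approximate identity for ${}^\perp X$: it is bounded since $\|\psi(u_\alpha)\|\le\|\psi\|_{cb}\|u_\alpha\|$, it lies in ${}^\perp X$, and for $a\in{}^\perp X$ one has $\psi(u_\alpha)\,a = \psi(u_\alpha a)\to\psi(a)=a$, using $\psi(a)=a$. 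Note this direction only needs the bounded approximate identity of $A^n(G)$, which amenability of $G$ supplies.

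For the reverse implication (b) $\Rightarrow$ (a), I would dualize the closed-ideal sequence $0\to{}^\perp X\to A^n(G)\to A^n(G)/{}^\perp X\to 0$ to the admissible short exact sequence of dual operator modules $0\to X\to VN^n(G)\to ({}^\perp X)^\ast\to 0$, where $X=({}^\perp X)^\perp$ by the bipolar theorem. A bounded approximate identity for ${}^\perp X$, via a weak$^\ast$-accumulation point in the bidual used to average, produces a completely bounded projection $VN^n(G)\to X$, so the sequence splits at the operator-space level. Operator amenability of $A^n(G)$ then upgrades this to a module splitting: the relevant first completely bounded cohomology group with dual-module coefficients vanishes, so the admissible sequence splits in the module category, and the resulting module projection $P\colon VN^n(G)\to X$ exhibits $X$ as invariantly complemented.

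The main obstacle I anticipate is precisely this last step: passing from a mere completely bounded linear complement to an invariant (module) one while retaining weak$^\ast$-continuity, so that the projection genuinely lives on the dual $VN^n(G)$ and not only on a predual. The care needed is to verify admissibility of the dual sequence from the approximate identity and to invoke operator amenability in the correct form (vanishing of $\mathcal{H}^1_{cb}$ for the appropriate dual operator module, equivalently averaging against a bounded approximate diagonal); the commutative and multiplier structure of $A^n(G)$ should be used throughout to keep the constructed maps module homomorphisms.
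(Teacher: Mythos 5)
The overall architecture of your argument is the right one and, for the necessity of amenability, coincides with what the paper intends: the paper's proof of that direction is the single clause ``the backward part again follows from Theorem \ref{ABAI}'', and your choice $X=\{0\}$, for which $^\perp X=A^n(G)$ and (a) holds trivially, is exactly the reduction that makes this work. For the forward direction the paper does not argue at all --- it cites \cite[Proposition 6.4]{For1} (Forrest's characterization for $A(G)$, transplanted to $A^n(G)$) together with Theorem \ref{ABAI}. You instead reconstruct the equivalence from scratch, which is more informative but introduces one genuine gap: in your (a) $\Rightarrow$ (b) step you pass to ``the preadjoint'' of the invariant projection $P\colon VN^n(G)\to X$. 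Being invariantly complemented only provides a bounded module projection; nothing guarantees $P$ is weak$^*$-continuous, so a preadjoint $\phi$ on $A^n(G)$ need not exist, and the construction $\psi(u_\alpha)$ collapses. The standard repair is the one you reserved for the other direction: a complemented (even non-invariantly) weak$^*$-closed subspace $X=({}^\perp X)^{\perp}$ makes the ideal ${}^\perp X$ weakly complemented, and operator amenability of $A^n(G)$ (established in the preceding theorem) then forces ${}^\perp X$ to have a bounded approximate identity, by the operator analogue of Runde's theorem on ideals of amenable algebras.

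Conversely, your (b) $\Rightarrow$ (a) step is over-engineered, and the ``main obstacle'' you flag dissolves on inspection. If $(e_\beta)$ is a bounded approximate identity for $I={}^\perp X$, then $Q(T)=T-\lim_{\mathcal U}e_\beta\cdot T$ (limit along an ultrafilter, which exists by boundedness and Banach--Alaoglu) is already a bounded projection onto $X=I^{\perp}$: one checks $e_\beta\cdot T=0$ for $T\in X$, while $\langle T-Q(T),v\rangle=\lim\langle T,e_\beta v\rangle=\langle T,v\rangle$ for $v\in I$ shows $Q(T)-T\in{}^{\perp}I{}^{\perp}$\,; and $Q$ commutes with the module action because $A^n(G)$ is commutative. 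So no admissible-sequence splitting, no vanishing of $\mathcal H^1_{cb}$, and no amenability hypothesis is needed for this implication --- and weak$^*$-continuity of the projection is not part of the definition of invariant complementation in any case. In short: your proof is salvageable and genuinely different from the paper's citation-based proof, but as written the two implications have their tools assigned to the wrong directions, and (a) $\Rightarrow$ (b) is incomplete until the weak$^*$-continuity assumption is removed.
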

    \begin{proof}
        The forward part is a consequence of Theorem \ref{ABAI} and \cite[Proposition 6.4]{For1}, while the backward part again follows from Theorem \ref{ABAI}.
    \end{proof}
    Our next and final Corollary describes, when is the inclusion of $A^n(G)$ into $MA^n(G)$ an isometry.
    \begin{cor}
        Let $G$ be an amenable group. Then the inclusion of $A^n(G)$ into $MA^n(G)$ is an isometry.
    \end{cor}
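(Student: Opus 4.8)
The plan is to isolate the two inequalities defining an isometry: one is automatic and the other rests on the existence of a \emph{contractive} (not merely bounded) approximate identity. Since $A^n(G)$ is a completely contractive, and in particular a contractive, Banach algebra, its multiplication is submultiplicative; hence for every $u\in A^n(G)$ the multiplication operator $m_u$ satisfies
\begin{equation*}
\|m_u\|_{MA^n(G)}=\sup_{\|v\|_{A^n(G)}\le 1}\|uv\|_{A^n(G)}\le \|u\|_{A^n(G)}.
\end{equation*}
Thus the inclusion $A^n(G)\hookrightarrow MA^n(G)$ is always a contraction, and the entire content of the statement is the reverse inequality $\|u\|_{A^n(G)}\le \|m_u\|_{MA^n(G)}$.

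To obtain the reverse inequality I would first produce an approximate identity for $A^n(G)$ that is bounded by $1$. Amenability of $G$ gives, via \cite{Lep}, a bounded approximate identity $(\phi_\alpha)_\alpha$ for $A(G)$ that may be chosen with $\|\phi_\alpha\|_{A(G)}\le 1$. Setting $u_\alpha=\phi_\alpha\otimes\cdots\otimes\phi_\alpha$ exactly as in the proof of Theorem \ref{ABAI}, this net is an approximate identity for $A^n(G)$. Moreover, since the extended Haagerup tensor norm agrees with the Haagerup norm on the algebraic tensor product and the latter is a cross norm on elementary tensors, one gets $\|u_\alpha\|_{A^n(G)}\le \|\phi_\alpha\|_{A(G)}^{\,n}\le 1$. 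Hence $A^n(G)$ admits a contractive approximate identity.

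With such a net, the reverse inequality is a routine limiting argument. For $u\in A^n(G)$, commutativity gives $u u_\alpha=m_u(u_\alpha)$, whence
\begin{equation*}
\|u u_\alpha\|_{A^n(G)}\le \|m_u\|_{MA^n(G)}\,\|u_\alpha\|_{A^n(G)}\le \|m_u\|_{MA^n(G)}.
\end{equation*}
Because $u u_\alpha\to u$ in $A^n(G)$, continuity of the norm yields $\|u\|_{A^n(G)}=\lim_\alpha\|u u_\alpha\|_{A^n(G)}\le \|m_u\|_{MA^n(G)}$. Combining this with the contraction estimate from the first paragraph gives $\|u\|_{A^n(G)}=\|m_u\|_{MA^n(G)}$, i.e.\ the inclusion is an isometry.

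The step I expect to require the most care is the normalization of the approximate identity: one must verify both that \cite{Lep} can be invoked to obtain $\phi_\alpha$ with $\|\phi_\alpha\|_{A(G)}\le 1$ rather than merely uniformly bounded, and that the extended Haagerup norm does not inflate $\|\phi_\alpha\otimes\cdots\otimes\phi_\alpha\|$ beyond $\prod_i\|\phi_\alpha\|_{A(G)}$. Once the contractive bound $\|u_\alpha\|\le 1$ is secured, the remaining arguments are formal.
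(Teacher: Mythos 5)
Your proposal is correct and follows essentially the same route as the paper: the inclusion is a contraction by submultiplicativity, and the reverse inequality comes from a norm-one approximate identity (obtained from Theorem \ref{ABAI} / Leptin) together with the estimate $\|u_\alpha u\|\le\|u_\alpha\|\,\|u\|_{MA^n(G)}$ and a limit. Your extra care in checking that the Leptin net can be taken contractive and that the extended Haagerup norm is a cross norm on elementary tensors fills in a normalization the paper asserts without comment.
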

    \begin{proof}
        For any locally compact group $G$ is amenable, it is clear that including $A^n(G)$ into $MA^n(G)$ is a contraction. Therefore, $\|u\|_{MA^n(G)}\leq \|u\|_{A^n(G)}$ for all $u\in A^n(G).$ On the other hand, using the assumption that $G$ is amenable, by Theorem \ref{ABAI} $A^n(G)$ has a bounded approximate identity, say $\{u_\alpha\}$ such that $\|u_\alpha\|\leq 1$ for all $\alpha.$ Thus, if $u\in A^n(G),$ then $$\|u_\alpha u\|_{A^n(G)}\leq \|u_\alpha\|_{A^n(G)} \|u\|_{MA^n(G)}\leq \|u\|_{MA^n(G)}.$$ Hence the proof.
    \end{proof}
    
\section{Approximation properties}
In this section, we will analyze the approximation properties (AP) of $A^n(G)$ in $M A^n(G)$ with the help of approximation identities of $A^n(G)$. First, we prove that $M A^n(G)$ is a dual Banach Space.
\begin{prop}
 Consider $G$ to be locally compact group and $Q^n(G)$ is the completion of $L^{1}(G^n)$ with respect to the norm
$$
\|f\|_{Q^n(G)}=\sup \left\{\left|\int_{G^n} f(x_1,\ldots,x_n) u(x_1,\ldots,x_n) d x_1 \ldots dx_n \right|: u \in M(A^n(G)),\|u\|_{M(A^n(G))} \leq 1\right\}
$$
then $Q^n(G)^{*}=M(A^n(G))$. In other words, every bounded linear functional $\Psi$ on $Q^n(G)$ is of the form
$$
\Psi(f)=\int_{G^n} f(x_1,\ldots,x_n) u(x_1,\ldots,x_n) d x_1 \ldots dx_n, \quad f \in L^{1}(G^n)
$$
for some $u \in M(A^n(G))$, and $\|\Psi\|=\|u\|_{M(A^n(G))}$.   
\end{prop}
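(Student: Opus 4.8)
The plan is to realize $M(A^n(G))$ as the dual of $Q^n(G)$ through the canonical bracket $\langle f,u\rangle=\int_{G^n} f\,u\,dx_1\ldots dx_n$, and to show that the induced map $u\mapsto\Psi_u$ is an isometric isomorphism from $M(A^n(G))$ onto $Q^n(G)^\ast$. That $u\mapsto\Psi_u$ is well defined, contractive and injective is immediate: by the very definition of $\|\cdot\|_{Q^n(G)}$ one has $|\langle f,u\rangle|\le\|f\|_{Q^n(G)}\,\|u\|_{M(A^n(G))}$ for $f\in L^1(G^n)$, so $\Psi_u$ extends from the dense subspace $L^1(G^n)$ to $Q^n(G)$ with $\|\Psi_u\|\le\|u\|_{M(A^n(G))}$; and $\Psi_u=0$ forces $\int f\,u=0$ for every $f\in L^1(G^n)$, whence $u=0$. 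Thus everything reduces to surjectivity together with the reverse norm estimate.

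The single fact on which the whole argument rests is the following: with $B=\{u\in M(A^n(G)):\|u\|_{M(A^n(G))}\le1\}$, the set $B$ is compact in the weak$^\ast$ topology $\sigma(L^\infty(G^n),L^1(G^n))$. Granting this, surjectivity runs as follows. Since each $f\in L^1(G^n)$ gives a weak$^\ast$-continuous function $\hat f(u)=\langle f,u\rangle$ on the compact space $B$ with $\|\hat f\|_{C(B)}=\|f\|_{Q^n(G)}$, the assignment $f\mapsto\hat f$ extends to an isometric embedding $Q^n(G)\hookrightarrow C(B)$. Given $\Psi\in Q^n(G)^\ast$, Hahn--Banach extends it to $C(B)$ and the Riesz representation theorem produces a regular Borel measure $\mu$ on $B$ with $\|\mu\|=\|\Psi\|$ and $\Psi(f)=\int_B\langle f,u\rangle\,d\mu(u)$. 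I then define $u_0\in L^\infty(G^n)$ by the weak$^\ast$ integral $\langle f,u_0\rangle=\int_B\langle f,u\rangle\,d\mu(u)$, which exists precisely because $B$ is weak$^\ast$-compact and bounded, so that $\Psi=\Psi_{u_0}$. Writing $d\mu=h\,d|\mu|$ with $|h|\equiv1$ and pushing the probability measure $|\mu|/\|\mu\|$ forward under $u\mapsto h(u)u$ (which maps $B$ into $B$ since $B$ is balanced), the element $u_0$ becomes $\|\mu\|$ times the barycenter of a probability measure supported on $B$; as $B$ is weak$^\ast$-compact and convex, that barycenter lies in $B$, giving $u_0\in M(A^n(G))$ with $\|u_0\|_{M(A^n(G))}\le\|\mu\|=\|\Psi\|$. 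Combined with injectivity and the contraction $\|\Psi_{u_0}\|\le\|u_0\|_{M(A^n(G))}$ already noted, this yields both surjectivity and $\|\Psi\|=\|u_0\|_{M(A^n(G))}$, i.e. the isometry.

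The main obstacle is the weak$^\ast$-compactness of $B$. Boundedness is harmless, since the multiplier norm dominates the sup norm, $\|u\|_\infty\le\|u\|_{M(A^n(G))}$ (the range of $u$ lies in the spectrum of $m_u$), so $B$ sits inside a multiple of the $L^\infty(G^n)$-ball and Banach--Alaoglu applies. The real work is weak$^\ast$-closedness: given a net $u_\lambda\in B$ with $u_\lambda\to u$ weak$^\ast$ in $L^\infty(G^n)$, I must show $u\in M(A^n(G))$ with $\|u\|_{M(A^n(G))}\le1$. The natural plan is, for fixed $v\in A^n(G)$, to use that multiplication by $v$ is weak$^\ast$-continuous on $L^\infty(G^n)$ (being the adjoint of $g\mapsto vg$ on $L^1(G^n)$), so that $u_\lambda v\to uv$ weak$^\ast$; since $\|u_\lambda v\|_{A^n(G)}\le\|v\|_{A^n(G)}$, the delicate point is to identify the weak$^\ast$ limit $uv$ as a genuine element of $A^n(G)$ with $\|uv\|_{A^n(G)}\le\|v\|_{A^n(G)}$, exploiting the predual relation $A^n(G)^\ast=VN^n(G)$ and the weak$^\ast$-dense image of $L^1(G^n)$ in $VN^n(G)$. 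This closedness step is the multidimensional counterpart of the classical fact that multiplier unit balls are weak$^\ast$-compact (cf.\ \cite{HK}), and I expect it to be the technical heart of the argument; once it is in place, the duality $Q^n(G)^\ast=M(A^n(G))$ follows as above.
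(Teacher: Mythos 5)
Your argument is correct in substance but reaches the conclusion by a genuinely different route from the paper. The paper's proof is the short bipolar-type argument: since $\|f\|_{Q^n(G)}=\sup_{u\in B}|\langle f,u\rangle|$ exhibits the unit ball of $Q^n(G)$ as the prepolar of $B=\{u\in M(A^n(G)):\|u\|_{M(A^n(G))}\le 1\}$ inside $L^1(G^n)$, a functional $\Psi$ of norm at most one restricts to a bounded functional on $L^1(G^n)$ (because $\|f\|_{Q^n(G)}\le\|f\|_1$), is represented there by some $v\in L^\infty(G^n)$, and the Hahn--Banach/bipolar theorem places $v$ in the $\sigma(L^\infty,L^1)$-closure of $B$; the cited $n$-dimensional version of \cite[Lemma 5.1.4]{KL2} then identifies $v$ locally a.e.\ with a multiplier of norm at most one. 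You instead embed $Q^n(G)$ isometrically into $C(B)$, extend $\Psi$ by Hahn--Banach, represent the extension by a measure on $B$ via Riesz, and recover the multiplier as a multiple of a barycenter. Both proofs stand or fall on exactly the same technical fact --- that $B$ is $\sigma(L^\infty,L^1)$-compact, equivalently weak$^*$-closed up to local a.e.\ equality --- which the paper outsources to \cite{KL2} and you defer to a ``multidimensional counterpart'' of the classical fact in \cite{HK}, so neither treatment is more complete on this point. What your route costs is extra machinery ($C(B)$, Riesz representation, the measurable phase $h$, barycenters on a possibly non-metrizable $B$), none of which is needed: once $B$ is known to be weak$^*$-compact, your $u_0$ is nothing but the $L^\infty$-representative of $\Psi|_{L^1}$, and the bipolar theorem already puts it in $\|\Psi\|\cdot B$.

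One caution about the step you rightly call the technical heart. Your sketch proposes to show directly that the weak$^*$ limit $uv$ of the bounded net $u_\lambda v$ lies in $A^n(G)$ with $\|uv\|_{A^n(G)}\le\|v\|_{A^n(G)}$. Note that the unit ball of $A^n(G)$ is itself \emph{not} $\sigma(L^\infty,L^1)$-closed (already for $n=1$ and $G=\mathbb{Z}$ its closure is the unit ball of $B(\mathbb{Z})$), so the limit of a bounded net in $A^n(G)$ a priori lands only in a larger Fourier--Stieltjes-type object; one must then use compact supports, or the ideal property of $A^n(G)$ there, to descend back into $A^n(G)$. That descent is precisely the content of the one-dimensional lemma the paper cites, and it is the piece you would still need to supply in the $n$-dimensional setting for either proof to be self-contained.
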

   \begin{proof} 
   Observe that for every $u \in M(A^n(G))$, the map
$$
f \rightarrow \int_{G^n} f(x_1,\ldots,x_n) u(x_1,\ldots,x_n)dx_1 \ldots dx_n , \quad f \in L^{1}(G^n)
$$
extends to a bounded linear functional $\phi_{u}$ on $Q^n(G)$ and also  $\left\|\Psi_{u}\right\| \leq\|u\|_{M(A^n(G))}$. 
Conversely, let $\Psi \in Q^n(G)^{*}$ with $\|\Psi\|=1$. As the restriction of $\Psi$ to $L^{1}(G^n)$ is also a bounded linear functional on $L^{1}(G^n)$, then there exists $v \in L^{\infty}(G^n)$ with
$$
\Psi(f)=\int_{G^n} f(x_1,\ldots,x_n) v(x_1,\ldots,x_n)d x_1 \ldots dx_n, \quad f \in L^{1}(G^n)
$$
Now, as  $\left|\int_{G^n} f(x_1,\ldots,x_n) v(x_1,\ldots,x_n)dx_1 \ldots dx_n \right| \leq 1$ for all $f \in L^{1}(G^n)$ with $\|f\|_{Q^n(G)} \leq 1$, it will follow from the Hahn Banach theorem and the definition of $\|\cdot\|_{E}$ that $v$ belongs to the $\sigma\left(L^{\infty}, L^{1}\right)$ closure of the unit ball of $M(A^n(G))$. Therefore, by n-dimensional version of \cite[Lemma 5.1.4]{KL2}, $v$ is locally almost everywhere equal to some $u \in M(A^n(G))$, and $\|u\|_{M(A^n(G))} \leq 1$. This finishes the proof.
\end{proof}

The above theorem implies that $M(A^n(G))$ is a dual Banach space.
and $Q^n(G)$ denotes the predual of $M A^n(G).$
We say that the multidimensional Fourier algebra $A^n(G)$ has approximation properties (AP) in the multiplier algebra $M A^n(G)$ if there is a net $\{\phi_\alpha\}$ in $A^n(G)$ such that $\phi_\alpha \rightarrow 1$ in the $\sigma(M A^n(G),Q^n(G))$-topology, where $Q^n(G)$ denotes the predual of $M^{cb} A^n(G).$ The major result of this article is Theorem 4.2 which characterizes the AP of $A^n(G)$. 

 For  $v \in A^n(G)\cap C_c(G^n)$ and $u \in M A^n(G)$, the convolution $v \ast u \in M A^n(G)$. Further $$\|v \ast u\|_{M A^n(G)} \leq \|u\|_{M A^n(G)} \underset{G^n}{\int} |v(x_1,\ldots,x_n)|dx_1 \ldots dx_n .$$ In fact, for any $f \in L^1(G^n)$ with $\|f\|_{Q^n(G)} \leq 1$, we have
\begin{align*}
&\left|\langle v\ast u,f\rangle\right|\\ = & \left|\underset{G^n}{\int} \underset{G^n}{\int} v(y_1,\ldots,y_n)u(y_{1}^{-1}x_1,\ldots,y_{n}^{-1}x_n)f(x_1,\ldots,x_n)dy_1\ldots dy_n dx_1 \ldots dx_n\right| \\ = & \left| \underset{G^n}{\int}v(y_1,\ldots,y_n)\langle L_{(y_1,\ldots,y_n)} u,f \rangle dy_1\ldots dy_n\right|\\ \leq & \underset{G^n}{\int}|v(y_1,\ldots,y_n)| \|L_{(y_1,\ldots,y_n)} u\|_{MA^n(G)} dy_1\ldots dy_n\\ = & \|u\|_{MA^n(G)} \underset{G^n}{\int}|v(y_1,\ldots,y_n)| dy_1 \ldots dy_n.
\end{align*}
Thus for any $T \in VN^n(G), u \in A^n(G) \text{ and } v\in A^n(G)\cap C_c(G^n)$, the function
$$ \omega^n_{T,u,v} (w) = \langle T, (v \ast w)u \rangle,\ w \in MA^n(G) $$
defines a bounded linear functional on $MA^n(G)$ satisfying 
$$\|\omega^n_{T,u,v}\| \leq \|T\|\|u\| \underset{G^n}{\int}|v(y_1,\ldots,y_n)| dy_1\ldots dy_n.$$
\begin{lem}\label{Lemma}
    Let $G$ be a locally compact group. Let $T \in VN^n(G), u \in A^n(G)$ and 
    $v \in A^n(G)\cap C_c(G^n)$ with $v(x_1,\ldots,x_n) \geq 0\ \forall\ (x_1,\ldots,x_n) \in G^n$ and\\ $\underset{G^n}{\int} v(x_1,\ldots,x_n) dx_1 \ldots dx_n =1,$ then $\omega^n_{T,u,v} \in Q^n(G) \text{ and } \|\omega^n_{T,u,v}\| \leq \|T\| \|u\|.$
\end{lem}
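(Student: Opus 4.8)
The norm estimate needs no new work: it is exactly the displayed bound $\|\omega^n_{T,u,v}\|\le\|T\|\,\|u\|\int_{G^n}|v|$ obtained immediately before the statement, specialised to the present hypotheses $v\ge 0$ and $\int_{G^n}v=1$, which give $\int_{G^n}|v|=1$. Thus the whole content of the lemma is the \emph{membership} $\omega^n_{T,u,v}\in Q^n(G)$; a priori we only know $\omega^n_{T,u,v}\in (MA^n(G))^{*}$. Since the preceding Proposition gives $Q^n(G)^{*}=MA^n(G)$, the space $Q^n(G)$ is precisely the space of $\sigma(MA^n(G),Q^n(G))$-continuous functionals on $MA^n(G)$, so the plan is to prove that $\omega^n_{T,u,v}$ is weak*-continuous. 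By the Krein--Smulian (Banach--Dieudonn\'e) theorem it suffices to check this on the closed unit ball of $MA^n(G)$: for a bounded net $(w_i)$ with $w_i\to w$ in $\sigma(MA^n(G),Q^n(G))$ I must show $\langle T,(v\ast w_i)u\rangle\to\langle T,(v\ast w)u\rangle$.

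The first step is to dispose of the convolution. The computation preceding the lemma shows that for $g\in L^1(G^n)\subseteq Q^n(G)$ one has $\langle v\ast w,g\rangle=\langle w,\check v\ast g\rangle$, where $\check v$ is the appropriate integrable reflection of $v$; since $\|\check v\ast g\|_{Q^n(G)}=\sup_{\|w\|_{MA^n}\le 1}|\langle v\ast w,g\rangle|\le \|v\|_{1}\|g\|_{Q^n(G)}=\|g\|_{Q^n(G)}$, the map $g\mapsto\check v\ast g$ extends to a contraction of $Q^n(G)$ and serves as a pre-adjoint for $w\mapsto v\ast w$. Hence $w\mapsto v\ast w$ is weak*--weak* continuous on $MA^n(G)$, so $v\ast w_i\to v\ast w$ weak*; and since $L^1(G^n)\hookrightarrow Q^n(G)$, in particular $v\ast w_i\to v\ast w$ in $\sigma(L^{\infty}(G^n),L^1(G^n))$, with the net bounded in $\|\cdot\|_{\infty}$.

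It then remains to push this through multiplication by $u$ and pairing with $T$, i.e.\ to show $(v\ast w_i)u\to(v\ast w)u$ in $\sigma(A^n(G),VN^n(G))$. Testing against the weak*-dense family $\{\lambda_n(g):g\in L^1(G^n)\}$ coming from the regular representation of $G^n$ is routine: a change of variables rewrites $\langle\lambda_n(g),(v\ast w_i)u\rangle$ as $\int_{G^n}w_i\,(\check v\ast(gu))$ with $\check v\ast(gu)\in L^1(G^n)$, and this converges by the $\sigma(L^{\infty},L^1)$-convergence just established. Equivalently, for $T=\lambda_n(g)$ the functional $\omega^n_{\lambda_n(g),u,v}$ is represented by the honest $L^1(G^n)$-function $\check v\ast(gu)$, hence lies in $Q^n(G)$.

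The main obstacle is upgrading this from the weak*-dense family $\lambda_n(L^1(G^n))$ to an arbitrary $T\in VN^n(G)$: weak*-density is not enough, as $\lambda_n(L^1(G^n))$ is far from norm-dense in $VN^n(G)$ and the two limits cannot be interchanged for free (indeed the estimate $\|\omega^n_{T,u,v}-\omega^n_{\lambda_n(g),u,v}\|_{Q^n}\le\|T-\lambda_n(g)\|_{VN^n}\|u\|$ is useless here). This is exactly where the hypotheses on $v$ must be used. Because $v\in C_c(G^n)$, convolution by $v$ is a smoothing, so $\{v\ast w_i\}$ is uniformly bounded and left-equicontinuous (via $\|(L_s v-v)\ast w_i\|_{\infty}\le\|L_s v-v\|_{1}\sup_i\|w_i\|_{\infty}$), and multiplication by $u\in A^n(G)\subseteq C_0(G^n)$ forces uniform decay; by Arzel\`a--Ascoli the net $\{(v\ast w_i)u\}$ is then relatively compact in $C_0(G^n)$ and converges uniformly to $(v\ast w)u$. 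I expect the delicate point to be promoting this uniform, compactly-controlled convergence, together with boundedness in $A^n(G)$, to genuine $\sigma(A^n(G),VN^n(G))$-convergence against a \emph{general} $T$ --- equivalently, showing that the representing functional $u\cdot T$ lies in the predual $Q^n(G)$ and not merely in $(MA^n(G))^{*}$. My plan is to settle this by an approximation argument that exploits both the compact support and the normalisation $v\ge 0,\ \int_{G^n}v=1$ to reduce the pairing with a general $T$ to the $\lambda_n(L^1(G^n))$-case already treated, after which the Krein--Smulian reduction delivers $\omega^n_{T,u,v}\in Q^n(G)$ with the stated norm bound.
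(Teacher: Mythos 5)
Your framework is sound as far as it goes: identifying $Q^n(G)$ with the $\sigma(MA^n(G),Q^n(G))$-continuous functionals on $MA^n(G)$, invoking Krein--Smulian to reduce to bounded nets, exhibiting a pre-adjoint $g\mapsto \check v\ast g$ for $w\mapsto v\ast w$, and reading off the norm estimate from the displayed computation preceding the lemma are all legitimate. But the proof is not complete: the step you yourself flag as ``the main obstacle'' --- passing from $T$ in the weak$^{*}$-dense family $\lambda_n(L^1(G^n))$ to an arbitrary $T\in VN^n(G)$ --- is never carried out; the proposal ends with ``my plan is to settle this by an approximation argument,'' and that unexecuted step is precisely the content of the lemma. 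The Arzel\`a--Ascoli observation does not close the gap: uniform convergence of the bounded net $(v\ast w_i)u$ in $C_0(G^n)$ does not yield convergence in $\sigma(A^n(G),VN^n(G))$ against a general $T$, and, as you note, the estimate $\|\omega^n_{T,u,v}-\omega^n_{\lambda_n(g),u,v}\|\le\|T-\lambda_n(g)\|\,\|u\|$ is useless because $\lambda_n(L^1(G^n))$ is only weak$^{*}$-dense in $VN^n(G)$. So the membership $\omega^n_{T,u,v}\in Q^n(G)$ for general $T$ remains unproved.

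The paper's proof avoids this difficulty entirely by producing an explicit $L^1$-representative. After reducing to $u\in A^n(G)\cap C_c(G^n)$ (using that $Q^n(G)$ is norm closed in $MA^n(G)^{*}$ together with the norm bound), set $K=(\operatorname{supp}v)^{-1}\operatorname{supp}u$ and write
\[
(v\ast w)u=\int_{G^n}\Psi(y_1,\dots,y_n)\,(\chi_K w)(y_1^{-1},\dots,y_n^{-1})\,dy_1\cdots dy_n,
\qquad \Psi(y)=\bigl(R_{y}v\bigr)u,
\]
as a vector-valued integral in $A^n(G)$: the map $\Psi:G^n\to A^n(G)$ is norm continuous, vanishes off $K$, hence is bounded, and $(\chi_K w)(y^{-1})\,dy$ is a bounded Radon measure. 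For such an integral one may pull \emph{any} $T\in VN^n(G)$ inside, which after a change of variables gives $\omega^n_{T,u,v}(w)=\int_{G^n}w(y)g(y)\,dy$ with $g(y)=\langle T,\Psi(y^{-1})\rangle\,\chi_K(y)\,\Delta(y_1^{-1})\cdots\Delta(y_n^{-1})\in L^1(G^n)$. Thus $\omega^n_{T,u,v}$ is honestly represented by an $L^1(G^n)$-function and lies in $Q^n(G)$ by definition, with no density, equicontinuity, or limit-interchange argument needed. If you wish to salvage your route, this explicit representation is exactly the ``approximation argument'' you would have to supply.
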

\begin{proof}
    Without loss of generality, we assume that $u \in A^n(G) \cap C_c(G^n)$ since $Q^n(G)$ is norm closed in $M A^n(G)^*$ and $$\|\omega^n_{T,u,v}\| \leq \|T\|\|u\| \underset{G^n}{\int}|v(x_1,\ldots, x_n)| dx_1 \ldots dx_n.$$ 
    To conclude the proof, we will show that there exists $g \in L^1(G^n)$ such that 
    $$ \omega^n_{T,u,v} (u) = \underset{G^n}{\int} u(x_1,\ldots, x_n) g(x_1,\ldots, x_n)dx_1 \ldots dx_n\ \forall\ u \in MA^n(G).$$ 
    Let $K$ denotes the compact set $(\text{supp } v)^{-1} \text{supp } u$ and $w$ be any function in $MA^n(G).$ It is easy to verify that 
    $$[(v \ast w) u](x_1,\ldots ,x_n) =  [(v \ast \chi_K w) u](x_1,\ldots ,x_n)\ \forall\ (x_1,\ldots ,x_n) \in G^n.$$
    Also, we have 
    \begin{align*}
        & [(v \ast \chi_K w) u](x_1,\ldots ,x_n)\\ = & \underset{G^n}{\int}v(x_1y_1,\ldots, x_ny_n)(\chi_K w)(y^{-1}_1,\ldots,y^{-1}_n)u(x_1,\ldots ,x_n)dy_1\ldots dy_n\\ = & \underset{G^n}{\int}R_{(y_1,\ldots,y_n)}v(x_1,\ldots ,x_n)u(x_1,\ldots ,x_n)(\chi_K w)(y^{-1}_1,,\ldots,y^{-1}_n)dy_1\ldots dy_n.
    \end{align*}
    Now, define a map $\Psi: G^n \rightarrow A^n(G)$
    $$ \Psi(y_1,\ldots, y_n)(x_1,\ldots ,x_n)= R_{(y_1,\ldots,y_n)}v(x_1,\ldots ,x_n)u(x_1,\ldots ,x_n),$$
     for all $(y_1,\ldots, y_n)\in G^n$. Observe that the map $\Psi$ is norm continuous because the right translation of $v$ is a norm continuous map from $G^n \text{ to } A^n(G).$ Further, it is clear that $\Psi(y_1,\ldots, y_n)=0$ if $(y_1,\ldots, y_n) \notin K.$ Therefore the map $\Psi$ is bounded by the compactness of $K$.\\
     Consider a measure $\nu$ on $G^n$ defined as
     $$ d\nu(y_1,\ldots, y_n)=(\chi_K w)(y^{-1}_1,,\ldots,y^{-1}_n)dy_1 \ldots dy_n.$$
     The measure $d\nu$ is a bounded radon measure because $u$ is bounded and $K$ is compact. So, there exists a function $\zeta \in A^n(G) $ such that 
     $$ \zeta = \underset{G^n}{\int} \Psi(y_1,\ldots, y_n)d\nu(y_1,\ldots, y_n) $$
     and
     $$ \langle T, \zeta \rangle = \underset{G^n}{\int} \langle T, \Psi(y_1,\ldots, y_n) \rangle d\nu(y_1,\ldots, y_n)\ \forall\ T \in VN^n(G).$$
     Let $T = \delta_{x_1} \otimes \ldots \delta_{x_n}$ for some $(x_1,\ldots ,x_n) \in G^n$. Then 
     \begin{align*}
         & \zeta(x_1,\ldots ,x_n)\\
         = & \underset{G^n}{\int} \Psi(y_1,\ldots, y_n)(x_1,\ldots ,x_n) d\nu(y_1,\ldots, y_n)\\
         = & \underset{G^n}{\int}R_{(y_1,\ldots,y_n)}v(x_1,\ldots ,x_n)u(x_1,\ldots ,x_n)(\chi_K w)(y^{-1}_1,\ldots,y^{-1}_n)dy_1\ldots dy_n\\
         = & [(v \ast w) u](x_1,\ldots ,x_n).
     \end{align*}
     Therefore, $\zeta= [(v \ast w) u]$ and
     \begin{align*}
         & \omega^n_{T,u,v} (w) = \langle T, [(v \ast w) u]  \rangle = \langle T, \zeta \rangle\\
         = & \underset{G^n}{\int} \langle T, \Psi(y_1,\ldots, y_n) \rangle d\nu(y_1,\ldots, y_n)\\
         = & \underset{G^n}{\int} (\chi_K w)(y^{-1}_{1},\ldots,y^{-1}_{n})\langle T, \Psi(y_{1},\ldots, y_{n}) \rangle dy_{1} \ldots dy_{n}\\ 
         = & \underset{G^n}{\int}  w(y^{-1}_{1},\ldots,y^{-1}_{n})\langle T, \Psi(y_{1},\ldots, y_{n}) \rangle \chi_K(y^{-1}_{1},\ldots,y^{-1}_{n}) dy_{1} \ldots dy_{n}\\
         = & \underset{G^n}{\int}  w(y_{1},\ldots,y_{n})\langle T, \Psi(y^{-1}_{1},\ldots, y^{-1}_{n}) \rangle \chi_K(y_{1},\ldots,y_{n})\Delta y^{-1}_1 \ldots \Delta y^{-1}_n dy_{1}\ldots dy_{n}\\
         = & \underset{G^n}{\int}  w(y_{1},\ldots,y_{n})g(y_{1},\ldots,y_{n})dy_{1} \ldots dy_{n},
    \end{align*}
    where $$ g(y_{1},\ldots,y_{n})= \langle T, \Psi(y^{-1}_{1},\ldots, y^{-1}_{n}) \rangle \chi_K(y_{1},\ldots,y_{n})\Delta y^{-1}_1 \ldots \Delta y^{-1}_n.$$ Since,
    \begin{align*}
         & \underset{G^n}{\int} |g(y_{1},\ldots,y_{n})|dy_{1} \ldots dy_{n}\\
         = & \underset{G^n}{\int} |\langle T, \Psi(y^{-1}_{1},\ldots, y^{-1}_{n}) \rangle| \chi_K(y_{1},\ldots,y_{n})\Delta y^{-1}_1 \ldots \Delta y^{-1}_n dy_{1} \ldots dy_{n}\\
         \leq & \|T\| \underset{G^n}{\int}  \| \Psi(y^{-1}_{1},\ldots, y^{-1}_{n}) \| \chi_K(y_{1},\ldots,y_{n})\Delta y^{-1}_1 \ldots \Delta y^{-1}_n dy_{1} \ldots dy_{n}\\
         \leq & M \|T\| \lambda(K^{-1}) < \infty,
    \end{align*}
    where $M$ is the upper bound of $\Psi$ and $g \in L^1(G^n)$ as required.    
\end{proof}
The following statement is the main theorem of our paper, which characterizes the AP of $A^n(G)$ and serves as an analogous representation of the weak containment property. We must mention here that the following is the analogue of \cite[Theorem 3.2]{Miao2}.
\begin{thm}\label{main thm}
    Let $G$ be a locally compact group, then the following statements are equivalent.
    \begin{enumerate}[(i)]
        \item $A^n(G)$ has the AP in the corresponding multiplier algebra $M A^n(G)$.
        \item For any sequence $\{v_n\} \in A^n(G)$ and any sequence $\{f_n\} \in \otimes^n_h C_r^*(G)$ with $$\underset{n=1}{\overset{\infty}{\sum}}\|v_n\| \|f_n\| < \infty \text{ and } \langle u, \underset{n=1}{\overset{\infty}{\sum}}v_n f_n \rangle=0\ \forall\ u \in A^n(G)$$ will imply $$\underset{n=1}{\overset{\infty}{\sum}} \langle v_n, f_n \rangle =0.
        $$
        \item For any sequence $\{v_n\} \in A^n(G)$ and any sequence $\{T_n\} \in VN^n(G)$ with
        $$\underset{n=1}{\overset{\infty}{\sum}}\|v_n\| \|T_n\| < \infty \text{ and } \langle u, \underset{n=1}{\overset{\infty}{\sum}}v_n T_n \rangle=0\ \forall\ u \in A^n(G)$$ will imply $$\underset{n=1}{\overset{\infty}{\sum}} \langle v_n, T_n \rangle =0.
        $$
        \item There is a net  $\{u_\alpha\} \in A^n(G)$ such that for any sequence $\{v_n\} \in A^n(G)$ with $\|v_n\|_{A^n(G)} \rightarrow 0$, we have
        $$\underset{\alpha}{\lim}\|u_\alpha u- u\|_{A^n(G)} \rightarrow 0 \text{ uniformly for } u \text{ in } \{v_n\}.$$
        \item There is a net  $\{u_\alpha\} \in A^n(G)$ such that
        $$\underset{\alpha}{\lim}\|u_\alpha u- u \|_{A^n(G)} \rightarrow 0 \text{ uniformly for } u \text{ in any compact set of } A^n(G).$$
        \item There is a net $\{u_\alpha\} \in A^n(G)$  such that $ \langle u_\alpha, f \rangle \rightarrow \langle 1,f \rangle$	 uniformly for $f$ in any compact subset of $Q^n(G).$
    \end{enumerate}
\end{thm}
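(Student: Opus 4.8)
The plan is to prove the six conditions equivalent by combining a bipolar (duality) argument with Grothendieck's classical description of the approximation property and his lemma on compact subsets of a Banach space. The structural backbone is twofold: by the preceding Proposition $MA^n(G)=Q^n(G)^*$, so the $\sigma(MA^n(G),Q^n(G))$-topology appearing in (i) is an honest weak$^*$ topology; and since $VN^n(G)=A^n(G)^*$, the space $VN^n(G)$ is a completely contractive $A^n(G)$-module under $\langle u, v\cdot T\rangle:=\langle uv,T\rangle$, so that for sequences with $\sum_n\|v_n\|\,\|T_n\|<\infty$ the element $\sum_n v_n T_n$ makes sense in $VN^n(G)$ and the functional $w\mapsto\sum_n\langle w v_n,T_n\rangle$ is a well-defined member of $Q^n(G)$.

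I would first dispose of the two soft equivalences among the last three statements. The implication (v)$\Rightarrow$(iv) is immediate, because a norm-null sequence together with its limit $0$ is a compact subset of $A^n(G)$; conversely (iv)$\Rightarrow$(v) is Grothendieck's lemma that every relatively compact subset of a Banach space lies in the closed absolutely convex hull of a norm-null sequence, which upgrades uniform control on null sequences to uniform control on compacta. Next, for (i)$\Leftrightarrow$(vi) I would invoke the Mackey--Arens theorem: the topology of uniform convergence on compact subsets of $Q^n(G)$ lies between the weak$^*$ and the Mackey topology and is therefore compatible with the pairing $\langle MA^n(G),Q^n(G)\rangle$. Since $A^n(G)$ is a linear subspace of $MA^n(G)$, its closures in these two topologies coincide; thus $1$ lies in the weak$^*$-closure of $A^n(G)$ exactly when it lies in the closure for uniform convergence on compacta, which is the passage (i)$\Rightarrow$(vi) (the reverse being trivial since the latter topology is finer).

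The heart of the argument is the equivalence of (i)/(vi) with the trace conditions (iii) and (ii). I would first give a concrete description of $Q^n(G)$: using the elementary functionals $\omega^n_{T,u,v}$ produced in Lemma~\ref{Lemma} together with the module action above, I would show that $Q^n(G)$ is a quotient of the (operator-space projective) tensor product $A^n(G)\,\widehat{\otimes}\,VN^n(G)$, so that every $f\in Q^n(G)$ is represented as $w\mapsto\sum_n\langle w v_n,T_n\rangle$ with $\sum_n\|v_n\|\,\|T_n\|<\infty$. Granting this, the restriction of $f$ to $A^n(G)$ is $u\mapsto\langle u,\sum_n v_n T_n\rangle$ while its value at the identity multiplier is $\sum_n\langle v_n,T_n\rangle$. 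By the bipolar theorem, $1$ belongs to the weak$^*$-closure of the subspace $A^n(G)$ if and only if every $f\in Q^n(G)$ annihilating $A^n(G)$ annihilates $1$; reading this off the representation is precisely statement (iii). The equivalence (ii)$\Leftrightarrow$(iii) then follows from the completely isometric inclusion $\otimes^n_h C_r^*(G)\hookrightarrow \otimes_n^{\sigma h}VN(G)=VN^n(G)$, which yields (iii)$\Rightarrow$(ii) at once, the converse using the weak$^*$-density of $C_r^*(G)$ in $VN(G)$ to approximate each $T_n$ inside the absolutely summable series, a limiting step legitimised by the summability hypothesis. Finally I would close the loop between (v) and (vi): writing $\langle u_\alpha,f\rangle-\langle 1,f\rangle=\sum_n\langle u_\alpha v_n-v_n,T_n\rangle$ for $f=\sum_n v_n T_n$, uniform approximation of the identity on the relatively compact set $\{v_n\}$ forces this difference to $0$ uniformly over compact families in $Q^n(G)$, and the reverse matching recovers (v).

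The step I expect to be the main obstacle is establishing the concrete series representation of $Q^n(G)$, namely that the closed span of the functionals $\omega^n_{T,u,v}$ of Lemma~\ref{Lemma} exhausts the predual and that a general element is an absolutely convergent sum $\sum_n v_n T_n$ with the correct norm. This is where the multidimensional tensor structure must be handled with care: the Haagerup-type norms defining $A^n(G)=\otimes_n^{eh}A(G)$ and $VN^n(G)=\otimes_n^{\sigma h}VN(G)$ do not behave like the projective tensor norm, so one must verify that the module action of $A^n(G)$ on $VN^n(G)$ is completely contractive and compatible with the predual duality before the quotient description of $Q^n(G)$ can be read off. Once this representation and the module compatibility are in place, the remaining implications reduce to routine applications of the bipolar theorem, Grothendieck's compactness lemma, and the summability conditions.
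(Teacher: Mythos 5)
Your treatment of (iv)$\Leftrightarrow$(v) via Grothendieck's compactness lemma matches the paper, and your Mackey--Arens argument for (i)$\Leftrightarrow$(vi) is a legitimate (and arguably cleaner) alternative to the paper's convexity argument. But the core of your plan --- deducing (i)$\Leftrightarrow$(iii) from the bipolar theorem --- rests on the claim that $Q^n(G)$ is a quotient of $A^n(G)\,\widehat{\otimes}\,VN^n(G)$, i.e.\ that every functional $w\mapsto\sum_n\langle wv_n,T_n\rangle$ with $\sum_n\|v_n\|\,\|T_n\|<\infty$ lies in $Q^n(G)$ and that every element of $Q^n(G)$ arises this way. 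You flag this yourself as the main obstacle, but it is not merely technical: $Q^n(G)$ is by definition the \emph{norm closure of} $L^1(G^n)$ in $MA^n(G)^*$, and for an arbitrary $T\in VN^n(G)$ there is no reason the unsmoothed functional $w\mapsto\langle T,wu\rangle$ should be representable by (a norm limit of) $L^1$-functions --- the obvious attempt to approximate it by the functionals $\omega^n_{T,u,v}$ of Lemma~\ref{Lemma} requires $\|(v\ast w)u-wu\|_{A^n(G)}\to 0$ uniformly over the unit ball of $MA^n(G)$, which does not follow from continuity of translation. This is precisely why the paper never asserts such a representation with $VN^n(G)$-coefficients: the only series representation it uses is the one with coefficients in $\otimes^n_h C^*_r(G)$ (quoted from Miao), and only for the implication (ii)$\Rightarrow$(i). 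The implication (i)$\Rightarrow$(iii) is instead obtained by first proving (i)$\Rightarrow$(iv) and then (iv)$\Rightarrow$(iii) by a direct norm estimate with a rescaling of the null sequence $\{v_n/k\}$ --- a route that avoids ever placing $\sum_n v_nT_n$-type functionals inside $Q^n(G)$.

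Relatedly, your outline never explains how to pass from the weak$^*$ statement (i) to the \emph{norm}-approximation statements (iv)/(v); the phrase ``the reverse matching recovers (v)'' hides the two genuinely nontrivial steps of the paper's proof. First, to show that $\Psi(u)=(uv_n-v_n)_n$ can be driven weakly to $0$ in the $c_0$-direct sum $X=(\oplus A^n(G))_0$, one must replace $u_\alpha$ by the convolution $w\ast u_\alpha$ with a normalized compactly supported $w$, precisely so that the relevant coordinate functionals become the smoothed $\omega^n_{T_n,v_n,w}$, which Lemma~\ref{Lemma} does place in $Q^n(G)$. Second, one must then convert weak convergence to $0$ into norm convergence by Mazur's theorem (convex combinations) in $X$, and organize the resulting elements into a net indexed by pairs $(i,A)$. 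Without the smoothing step your weak convergence claim is unjustified for the same reason as above, and without the Mazur step you only obtain (vi), not (iv). So as written the proposal does not close the loop between the weak$^*$ conditions (i), (vi) and the norm conditions (iii), (iv), (v); filling the gap essentially forces you back onto the paper's route through Lemma~\ref{Lemma}.
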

\begin{proof}
     $(i) \implies (iv)$ Consider $X=(Q^n(G)\oplus Q^n(G) \oplus \ldots \oplus Q^n(G))_0$ i.e., the space of all sequences $\{u_n\} \in A^n(G)$ such that $\|u_n\|_{A^n(G)} \rightarrow 0$ and have sup-norm i.e.,$\|(u_n)\|=\text{sup}_n \|u_n\|_{A^n(G)}$. Observe that if $(v_n) \in X$ then $(u v_n -v_n) \in X$ for any $u \in A^n(G).$\\
     Now, define
     $$\Psi: A^n(G) \rightarrow X $$
     $$\Psi(u)= u v_n-v_n$$
     We claim that $0$ is in the closure of $\Psi(A^n(G))$ in the weak topology of $X$. Using hypothesis (i), there exists a net $\{u_\alpha\} \in A^n(G)$ with $u_\alpha \rightarrow 1$ in the associated weak$^*$ topology i.e., $\sigma(M A^n(G), Q^n(G))$-topology.
     
     Now, choose an element $w \in A^n(G) \cap C_c(G^n)$ with the property that $w(x_1,\ldots,x_n) \geq 0\ \forall\ (x_1,\ldots,x_n)\in G^n$ and $\underset{G^n}{\int} w(x_1,\ldots,x_n) dx_1 \ldots dx_n = 1.$ For all $\alpha$, the convolution $w \ast u_\alpha \in A^n(G)$ due to the compact support of $w.$
     We first show that $\Psi(u \ast u_\alpha) \rightarrow 0$ weakly in $X$. Let $T \in X^*,$ i.e., there is a sequence $T_n \in VN^n(G)$ with $\|T\|= \underset{n=1}{\overset{\infty}{\sum}} \|T_n\| < \infty$ and $$ \langle T,u_n \rangle = \underset{n=1}{\overset{\infty}{\sum}} \langle T_n, u_n \rangle \text{ for any } (u_n) \in X.$$
     Then 
     \begin{align*}
         \langle T, \Psi(w \ast u_\alpha) \rangle & = \underset{n=1}{\overset{\infty}{\sum}} \langle T_n, (w\ast u_\alpha) v_n-v_n \rangle\\
         &= \underset{n=1}{\overset{\infty}{\sum}} \langle T_n, (w\ast u_\alpha) v_n \rangle- \underset{n=1}{\overset{\infty}{\sum}}\langle T_n, v_n \rangle\\
         &= \underset{n=1}{\overset{\infty}{\sum}} \langle  \omega^n_{T_n,v_n,w},u_\alpha  \rangle-\underset{n=1}{\overset{\infty}{\sum}}\langle T_n, v_n \rangle = \left\langle u_\alpha,\underset{n=1}{\overset{\infty}{\sum}}   \omega^n_{T_n,v_n,w}  \right\rangle-\underset{n=1}{\overset{\infty}{\sum}}\langle T_n, v_n \rangle.
     \end{align*}
     From Lemma \ref{Lemma}, $\omega^n_{T_n,v_n,w} \in Q^n(G)$ and $$ \underset{n=1}{\overset{\infty}{\sum}} \|\omega^n_{T_n,v_n,w}\|_{Q^n(G)} \leq \underset{n=1}{\overset{\infty}{\sum}}\|T_n\|_{VN^n(G)} \|v_n\|_{A^n(G)} < \infty.$$
     This shows that $ \underset{n=1}{\overset{\infty}{\sum}} \omega^n_{T_n,v_n,w} \in Q^n(G).$ So,
     $$\langle u_\alpha,\underset{n=1}{\overset{\infty}{\sum}}   \omega^n_{T_n,v_n,w}  \rangle \rightarrow \langle 1,\underset{n=1}{\overset{\infty}{\sum}}   \omega^n_{T_n,v_n,w}  \rangle= \underset{n=1}{\overset{\infty}{\sum}} \langle T_n, (w\ast 1) v_n \rangle= \underset{n=1}{\overset{\infty}{\sum}} \langle T_n, v_n \rangle$$
     Hence, $\langle T, \Psi(w \ast u_\alpha) \rangle \rightarrow 0.$ We know that $\Psi(A^n(G))$ is convex and $0$ is in norm closure of $\Psi(A^n(G))$, so for every positive integer there is $u_{(i,A)} \in A^n(G),$ where $A= \text{ countable subset of }\{v_n\}$ such that $$ \| u_{(i,A)} v_n - v_n \|_{A^n(G)} \leq \|\Psi(u_{(i,A)})\|_X < \frac{1}{i} $$ for every $n$. Observe that the $(u_{(i,A)})$ is a net directed by $(i_1,A_1) \leq (i_2,A_2)$ if $i_1 \leq i_2$ and $A_1 \subseteq A_2$ which satisfies the requirement.\\
     $(iv) \implies (iii)$ Let $u_\alpha$ be a net which satisfies the condition (iv). Consider the sequence $\{T_n\} \in VN^n(G)$  and $\{v_n\}$ with the property that $$\underset{n=1}{\overset{\infty}{\sum}} \|T_n\|_{VN^n(G)}\|v_n\|_{A^n(G)} < \infty.$$ Without the loss of generality, let us assume that $\| v_n\|_{A^n(G)}=1\ \forall\ n.$ Then $\underset{n=1}{\overset{\infty}{\sum}} \|T_n\|_{VN^n(G)} < \infty,$ the series convergence implies that there are positive integers $i_1 < i_2 <\ldots $ such that for each k, $\underset{n=i_k}{\overset{i_{k+1}-1}{\sum}}\|T_n\| < \frac{1}{2^k}.$
     Now let us denote,
     $$ A= \{v_1,v_2,\ldots, v_{i_1-1}\} \cup \left\{ \frac{v_n}{k}:k=1,2,3 \ldots, i_k < n< i_{k+1} \right\}.$$
     The existence of obvious order makes it a sequence converging to zero in the norm. Using the hypothesis of (iv), for any $\epsilon >0$, there exists  $\alpha_0$ with the property that $\|u_\alpha w-w \| < \epsilon$ and $\underset{n=1}{\overset{i_{1}-1}{\sum}} \|u_\alpha v_n-v_n \| < \epsilon\ \forall\ w \in A\ \text{and}\ \alpha \geq \alpha_0.$ Now for all $\alpha \geq \alpha_0,$
     \begin{align*}
          & |\langle u_\alpha,\underset{n=1}{\overset{\infty}{\sum}} v_nT_n \rangle  - \underset{n=1}{\overset{\infty}{\sum}} \langle T_n, v_n \rangle| =  |\underset{n=1}{\overset{\infty}{\sum}} \langle T_n, u_\alpha v_n- v_n \rangle \leq  \underset{n=1}{\overset{\infty}{\sum}} \|T_n\| \|u_\alpha v_n- v_n\|\\
         = & \underset{n=1}{\overset{i_1-1}{\sum}} \|T_n\| \|u_\alpha v_n- v_n\| + \underset{k=1}{\overset{\infty}{\sum}} \underset{n=i_k}{\overset{i_{k+1}-1}{\sum}}\|T_n\| \|u_\alpha v_n- v_n\|\\
         \leq & (\underset{n=1}{\overset{\infty}{\sum}}\|T_n\|)\underset{n=1}{\overset{i_1-1}{\sum}}\|u_\alpha v_n- v_n\| + \underset{k=1}{\overset{\infty}{\sum}} \underset{n=i_k}{\overset{i_{k+1}-1}{\sum}}\|T_n\| \|u_\alpha \frac{v_n}{k}- \frac{v_n}{k}\|k\\
         < & \epsilon(\underset{n=1}{\overset{\infty}{\sum}}\|T_n\|)+\underset{k=1}{\overset{\infty}{\sum}} \underset{n=i_k}{\overset{i_{k+1}-1}{\sum}}\|T_n\|\epsilon k  \leq  \epsilon(\underset{n=1}{\overset{\infty}{\sum}}\|T_n\|)+\epsilon(\underset{k=1}{\overset{\infty}{\sum}}\frac{k}{2^k}).
     \end{align*}
     From the assumption of $(iii)$, $\langle u_\alpha,\underset{n=1}{\overset{\infty}{\sum}} v_nT_n \rangle=0\ \forall\ \alpha$ and $\epsilon$ is arbitrary, hence $\underset{n=1}{\overset{\infty}{\sum}} \langle T_n, v_n \rangle=0$, as required.\\
     $(iii) \implies (ii)$  is trivial.\\
     $(ii) \implies (i)$ In contradiction, assume that $1 \notin $ closure of $A^n(G)$ in $\sigma(M A^n(G),Q^n(G))$-topology. Then using the Hahn- Banach Theorem, there is a $f \in Q^n(G)$ such that $\langle1,f \rangle= 1 \text{ and } \langle u, f \rangle =0\ \forall\ u \in A^n(G)$.
     Using the characterization of $Q^n(G)$, (same as in \cite{Miao}) there exists a sequence $(v_n) \in A^n(G)$ and $(f_n) \in \otimes^n_h C_r^*(G)$ with $\|v_n\|_{A^n(G)} \|f_n\|_{\otimes^n_h C_r^*(G)} < \infty$ and $f= \underset{n=1}{\overset{\infty}{\sum}} v_n f_n$.  So, we have $\langle 1, \underset{n=1}{\overset{\infty}{\sum}} v_n f_n \rangle = \underset{n=1}{\overset{\infty}{\sum}} \langle v_n, f_n \rangle,$ where $\langle u, \underset{n=1}{\overset{\infty}{\sum}} v_n f_n \rangle = 0\ \forall\ u \in A^n(G).$ Hence, $(ii)$ fails.\\
     $(v) \implies (iv)$ is trivial.\\
     $(iv) \implies (v)$ Let $(u_\alpha)$ be the net satisfying the condition $(iv)$, then $(u_\alpha)$ will also satisfy the condition of $(v)$. If $S$ is a compact subset of $A^n(G)$, then by \cite[Proposition 1.e.2]{LT} there exists a sequence $\{v_n\} \in A^n(G)$ with $\|v_n\| \rightarrow 0$ with $S \subseteq \overline{\text{conv } \{v_n\}}.$ By $(iv)$, for every $\epsilon > 0,$ there exists a $\alpha_0$ such that $\| u_\alpha v_n-v_n\| < \epsilon\ \forall\ \alpha \geq \alpha_0.$ Also, $\|u_\alpha v-v \| < \epsilon\ \forall\ n $  $\forall v \in \text{conv}\{v_n\}.$ Now, for any $w \in \overline{\text{conv}\{v_n\}}$, there is a sequence $\{w_i\} \in \text{conv}\{v_n\}$ such that $\{w_i\} \rightarrow w$ in norm. Hence, $\|u_\alpha w - w \| = \lim_{i} \|u_\alpha w_i - w_i \|  \leq \epsilon\  \forall\ \alpha \geq \alpha_0,$ as required.\\
     $(vi) \implies (i)$ is trivial.\\
     $(i) \implies (vi)$ Let $u_\alpha \rightarrow 1$ in $\sigma(M A^n(G),Q^n(G)) \text{-topology}.$ For any compact set $K \subseteq Q^n(G)$, there exists a sequence $\{f_n\} \in Q^n(G)$ such that $K \subseteq \overline{\text{conv}\{f_n\}}$. Now it is enough to prove that $\langle u_\alpha,f \rangle \rightarrow \langle 1, f \rangle\ \forall\ f \in \{f_n\}.$ Consider $X=(Q^n(G)\oplus Q^n(G) \oplus \ldots \oplus Q^n(G))_0$, then the net $\Phi_\alpha =((u_\alpha-1)f_n)_{n=1}^{\infty} \in X$ because $\|f_n\| \rightarrow 0.$ Now, we claim that $\Phi_\alpha\rightarrow 0$ weakly in $X.$ Let $F \in X^*$, then there exists a sequence $\{x_i\}$ in $Q^n(G)^*= M A^n(G)$ with $\|F\| =\underset{i=1}{\overset{\infty}{\sum}} \|x_i\|$ and $\langle F, (g_i) \rangle = \underset{i=1}{\overset{\infty}{\sum}} \langle x_i, g_i \rangle$ for any $(g_i) \in X.$ Now $x_if_i \in (MA^n(G))^*$ with $\langle x_i f_i,x \rangle= \langle f_i,x_ix \rangle$ for $x \in MA^n(G)$. As $Q^n(G)$ is norm closure of $L^1(G^n)$ in $(MA^n(G))^*,$ then $x_if_i \in Q^n(G)$ and also $\underset{i=1}{\overset{\infty}{\sum}}x_if_i \in Q^n(G)$. So we have, 
     $$\langle F,\Phi_\alpha \rangle = \underset{i=1}{\overset{\infty}{\sum}} \langle x_i, (u_\alpha-1)f_i \rangle \underset{i=1}{\overset{\infty}{\sum}}  \langle (u_\alpha-1), x_if_i \rangle =  \left\langle u_\alpha-1, \underset{i=1}{\overset{\infty}{\sum}} x_if_i \right\rangle \rightarrow 0,$$ as $u_\alpha \rightarrow 1$ in weak$^*$- topology.
         For each $k,$ there is a convex combination of $\{\Phi_\alpha\}$i.e., $\underset{i=1}{\overset{m_k}{\sum}}\beta_i \Phi_{\alpha_i}$ with the property $\|\underset{i=1}{\overset{m_k}{\sum}}\beta_i \Phi_{\alpha_i}\| < \frac{1}{k}$. Let us denote $v_{(K,k)} = \underset{i=1}{\overset{m_k}{\sum}}\beta_i u_{\alpha_i}$, then for each $n$,
         \begin{align*}
             |\langle v_{(K,k)},f_n \rangle-\langle 1,f_n \rangle| =& |\langle v_{(K,k)}-1,f_n \rangle| = |\langle 1, (v_{(K,k)}-1)f_n \rangle| \\ \leq & \|(v_{(K,k)}-1)f_n\|
              =\|(\underset{i=1}{\overset{m_k}{\sum}}\beta_i u_{\alpha_i}-1)f_n\|\\
              = & \|(\underset{i=1}{\overset{m_k}{\sum}}\beta_i\Phi_{\alpha_i}\| < \frac{1}{k}.
         \end{align*}
         Hence, the net $v_{(K,k)}$ satisfies the requirement.
\end{proof}
Here is the result related to the AP of subgroups.
\begin{cor}
    Let $H$ be a closed subgroup of locally compact group $G$. If $A^n(G)$ has AP in $MA^n(G)$, then  $A^n(H)$ has AP in $MA^n(H).$
\end{cor}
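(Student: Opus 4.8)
The plan is to deduce this from the equivalence of condition (v) in Theorem~\ref{main thm} with the AP, using that restriction to $H^n$ is a surjective contractive algebra homomorphism and transferring the approximating net by lifting compact sets through this quotient map.

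First I would set up the restriction map. Let $R\colon A(G)\to A(H)$ be the restriction of functions; by Herz's restriction theorem this is a complete quotient map and an algebra homomorphism. Tensoring, the map $R^n:=R\otimes\cdots\otimes R\colon A^n(G)\to A^n(H)$ is exactly restriction of a function on $G^n$ to $H^n$ (on elementary tensors, $(u_1\otimes\cdots\otimes u_n)|_{H^n}=Ru_1\otimes\cdots\otimes Ru_n$), so it is a homomorphism. Since the extended Haagerup tensor product respects complete quotient maps, $R^n$ is again a complete quotient homomorphism of $A^n(G)$ onto $A^n(H)$; in particular it is a metric surjection, so every $v\in A^n(H)$ has, for each $\varepsilon>0$, a preimage $\tilde v\in A^n(G)$ with $R^n(\tilde v)=v$ and $\|\tilde v\|_{A^n(G)}<\|v\|_{A^n(H)}+\varepsilon$.

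Next I would transfer the net. Since $A^n(G)$ has the AP, Theorem~\ref{main thm} provides a net $\{u_\alpha\}\subseteq A^n(G)$ satisfying $(v)$, i.e. $\|u_\alpha u-u\|_{A^n(G)}\to 0$ uniformly on compact subsets of $A^n(G)$. I claim $\{R^n(u_\alpha)\}\subseteq A^n(H)$ witnesses $(v)$ for $H$. Fix a compact $S\subseteq A^n(H)$. By \cite[Proposition 1.e.2]{LT} there is a null sequence $\{v_k\}$ in $A^n(H)$ with $S\subseteq\overline{\operatorname{conv}}\{v_k\}$. Using the metric surjectivity of $R^n$, lift each $v_k$ to $\tilde v_k\in A^n(G)$ with $R^n(\tilde v_k)=v_k$ and $\|\tilde v_k\|<\|v_k\|+2^{-k}$; then $\|\tilde v_k\|\to 0$, so $\tilde S:=\overline{\operatorname{conv}}\{\tilde v_k\}$ is compact in $A^n(G)$, and $R^n(\tilde S)\supseteq S$ because $R^n(\tilde S)$ is compact, convex, and contains every $v_k$. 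For any $v\in S$ with chosen preimage $\tilde v\in\tilde S$, the homomorphism property gives $R^n(u_\alpha)v-v=R^n(u_\alpha\tilde v-\tilde v)$, whence $\|R^n(u_\alpha)v-v\|_{A^n(H)}\le\|u_\alpha\tilde v-\tilde v\|_{A^n(G)}$. Taking the supremum over $v\in S$ and using uniform convergence on the compact set $\tilde S$ yields $\sup_{v\in S}\|R^n(u_\alpha)v-v\|\le\sup_{w\in\tilde S}\|u_\alpha w-w\|\to 0$, so $\{R^n(u_\alpha)\}$ satisfies $(v)$ for $A^n(H)$. A final application of Theorem~\ref{main thm} then shows $A^n(H)$ has the AP in $MA^n(H)$.

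The main obstacle I expect is the very first step: verifying that $R^n$ is genuinely a metric (complete) surjection of $A^n(G)$ onto $A^n(H)$, i.e. that Herz's restriction theorem for $A(G)$ survives the $n$-fold extended Haagerup tensor product with the norm control needed to lift null sequences. Everything after that is soft functional analysis, since the argument only uses that restriction is a contractive surjective homomorphism admitting almost-norm-preserving lifts together with the null-sequence description of compact sets.
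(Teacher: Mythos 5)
Your proposal is correct and follows essentially the same route as the paper: both lift a null sequence from $A^n(H)$ to a null sequence in $A^n(G)$ via the (extended) Herz restriction theorem, apply the approximating net there, and push back down using that restriction is a contractive homomorphism; the paper works directly with condition (iv) of Theorem~\ref{main thm} while you pass through (v) and then reduce to null sequences anyway. The extension step you flag as the main obstacle is exactly the point the paper also asserts without detailed justification.
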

\begin{proof}
    As $A^n(G)$ has AP in $MA^n(G)$ then using Theorem \ref{main thm}, there exists a net in $\{u_\alpha\} \in A^n(G)$ such that $\|u_\alpha u - u \|_{A^n(G)} \rightarrow 0$ uniformly for $u$ in any compact subset of $A^n(G).$ Let $\{v_n\}$ be any sequence in $A^n(H)$ with $\|v_n\|_{A^n(H)} \rightarrow 0.$ Now, there exists a sequence $\{\tilde{v_n}\} \in A^n(G)$ with $\|\tilde{v_n}\| \rightarrow 0$ and $\tilde{v_n}(h) = v_n(h)\ \forall\ h \in H.$ Then, $\|u_\alpha \tilde{v_n} - \tilde{v_n} \|_{A^n(G)} \rightarrow 0\ \forall\ n.$ Since the restriction map from $A^n(G)$ to $A^n(H)$ is a contraction, then $\|(u_\alpha)_|{H} v_n - v_n \|_{A^n(H)} \rightarrow 0.$ Hence the net $(u_\alpha)_|{H} \in A^n(H)$ satisfies $(iv)$ of Theorem \ref{main thm} and $A^n(H)$ has AP in $MA^n(H).$
\end{proof}

\section{Weak amenability}
This section aims to prove Theorem 5.2. Let us begin with a proposition. 
\begin{prop}\label{Mp}
    Let $G$ be a locally compact group then
    \begin{enumerate}[(i)]
        \item $M^{cb}A(G) \odot \ldots \odot M^{cb}A(G) \subseteq M^{cb}A^n(G)$
        \item $A^n(G) \subseteq \overline{M^{cb}A(G) \odot \ldots\odot M^{cb}A(G)}^{\|.\|_{cbm}}.$
    \end{enumerate}
\end{prop}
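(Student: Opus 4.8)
The plan is to treat the two inclusions separately: the first by a functoriality argument carried out on the dual side, the second by a density-plus-norm-comparison argument. Throughout I identify an elementary element $b_1 \odot \cdots \odot b_n$ of $M^{cb}A(G)\odot\cdots\odot M^{cb}A(G)$ with the function $(x_1,\ldots,x_n)\mapsto b_1(x_1)\cdots b_n(x_n)$ on $G^n$, and by linearity it suffices for (i) to show that each such elementary product lies in $M^{cb}A^n(G)$. First I would pass to adjoints: since each $m_{b_i}\colon A(G)\to A(G)$ is completely bounded, its adjoint $m_{b_i}^{*}\colon VN(G)\to VN(G)$ is a normal (weak$^*$-continuous) completely bounded map with $\|m_{b_i}^{*}\|_{cb}=\|b_i\|_{cbm}$. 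Invoking the functoriality of the normal Haagerup tensor product for normal completely bounded maps (see \cite{ER2}), the map $\Theta:=m_{b_1}^{*}\otimes^{\sigma h}\cdots\otimes^{\sigma h}m_{b_n}^{*}$ is a normal completely bounded operator on $VN^n(G)=\otimes_n^{\sigma h}VN(G)$ with $\|\Theta\|_{cb}\leq\prod_{i=1}^{n}\|b_i\|_{cbm}$, and since $\Theta$ is normal it has a preadjoint $\Theta_{*}$ on $A^n(G)$ of the same completely bounded norm.

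It then remains to identify $\Theta_{*}$ with multiplication by $b_1\odot\cdots\odot b_n$. I would verify this on elementary tensors: for $u_i\in A(G)$ and $T_i\in VN(G)$ the pairing $\langle T_1\otimes\cdots\otimes T_n,\,\Theta_{*}(u_1\otimes\cdots\otimes u_n)\rangle=\langle\Theta(T_1\otimes\cdots\otimes T_n),\,u_1\otimes\cdots\otimes u_n\rangle$ factors multiplicatively and, using $\langle m_{b_i}^{*}T_i,u_i\rangle=\langle T_i,b_i u_i\rangle$, collapses to $\prod_{i=1}^{n}\langle T_i,b_i u_i\rangle$. Hence $\Theta_{*}(u_1\otimes\cdots\otimes u_n)=(b_1 u_1)\otimes\cdots\otimes(b_n u_n)$, which as a function on $G^n$ is exactly $(b_1\odot\cdots\odot b_n)\cdot(u_1\otimes\cdots\otimes u_n)$; by continuity and the density of $A(G)\odot\cdots\odot A(G)$ in $A^n(G)$ this shows $\Theta_{*}=m_{b_1\odot\cdots\odot b_n}$, so $b_1\odot\cdots\odot b_n\in M^{cb}A^n(G)$. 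The step I expect to be the main obstacle is justifying that the tensor product of the normal completely bounded maps $m_{b_i}^{*}$ is again a well-defined normal completely bounded map on $\otimes_n^{\sigma h}VN(G)$ and that its preadjoint acts on $A^n(G)$ as claimed; the remaining identification is a routine computation on elementary tensors.

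For part (ii) the point is that the algebraic tensor product $A(G)\odot\cdots\odot A(G)$ already sits inside both $A^n(G)$ and $M^{cb}A(G)\odot\cdots\odot M^{cb}A(G)$, so only a comparison of norms is needed. I would use that $A(G)\subseteq M^{cb}A(G)$ contractively, so that $A(G)\odot\cdots\odot A(G)\subseteq M^{cb}A(G)\odot\cdots\odot M^{cb}A(G)$, together with the fact that $A(G)\odot\cdots\odot A(G)$ is dense in $A^n(G)=\otimes_n^{eh}A(G)$ in the $A^n(G)$-norm. Finally, since $A^n(G)$ is a completely contractive Banach algebra by \cite[Proposition 4.2]{TT}, multiplication yields $\|u\|_{cbm}\leq\|u\|_{A^n(G)}$ for every $u\in A^n(G)$, i.e.\ the inclusion $A^n(G)\hookrightarrow M^{cb}A^n(G)$ is a complete contraction. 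Combining these, given $u\in A^n(G)$ I choose $w_k\in A(G)\odot\cdots\odot A(G)$ with $\|w_k-u\|_{A^n(G)}\to 0$; then $\|w_k-u\|_{cbm}\leq\|w_k-u\|_{A^n(G)}\to 0$, so $u$ lies in the $\|\cdot\|_{cbm}$-closure of $M^{cb}A(G)\odot\cdots\odot M^{cb}A(G)$. This part presents no genuine obstacle beyond the two standard contractive inclusions.
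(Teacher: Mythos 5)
Your argument is correct and rests on the same key fact as the paper's proof: the functoriality of the Haagerup-type tensor product under completely bounded maps. The paper applies it slot-by-slot on the predual side, exhibiting $\phi_1\otimes 1\otimes\cdots\otimes 1,\ldots,1\otimes\cdots\otimes 1\otimes\phi_n$ as completely bounded multipliers of $A^n(G)=\otimes_n^{eh}A(G)$ and then multiplying them, whereas you tensor all the adjoints at once on $VN^n(G)=\otimes_n^{\sigma h}VN(G)$ and pass to the preadjoint --- a cosmetic difference; and for (ii) your density-plus-contractivity argument (density of $A(G)\odot\cdots\odot A(G)$ in $A^n(G)$ together with the contractive inclusion $A^n(G)\hookrightarrow M^{cb}A^n(G)$, which holds because $A^n(G)$ is a completely contractive Banach algebra) is the natural and correct completion of the paper's one-line sketch.
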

\begin{proof}
    (i) Let $\phi_1,...,\phi_n\in M^{cb}A(G)$. Define a map $$m_{\phi_i}: A(G) \rightarrow A(G)$$ $$m_{\phi_i}(f) = \phi_i f\ \forall\ f \in A(G).$$ Observe that the map $m_{\phi_i}$ is completely bounded for all $i \in \{1,2,...n\}$ and the corresponding map $$m_{\phi_{1}} \otimes id \otimes \ldots \otimes id :A^n(G) \rightarrow A^n(G),$$ $$(m_{\phi_1} \otimes id \otimes \ldots \otimes id)(u) = (\phi_1 \otimes 1 \otimes \ldots \otimes 1)u$$ is also completely bounded. Hence, $\phi_1 \otimes 1 \otimes \ldots \otimes 1 \in M^{cb}A^n(G).$ Similarly, $1 \otimes \phi_2 \otimes \ldots \otimes 1, 1 \otimes 1 \otimes \phi_3 \otimes \ldots \otimes 1, ..., 1 \otimes 1 \otimes \ldots \otimes \phi_n \in M^{cb}A^n(G).$ Since $$\phi_1 \otimes \ldots \otimes \phi_n= (\phi_1 \otimes 1 \otimes \ldots \otimes 1) \cdots (1 \otimes 1 \otimes \ldots \otimes \phi_n),$$then $\phi_1 \otimes \ldots \otimes \phi_n \in M^{cb}A^n(G).$\\
    (ii)The norm inequality follows from the fact that $M^{cb}A(G)$ is a Banach algebra with $\|.\|_{cbm}$ norm.
\end{proof}

Here is the theorem for this section. This is the analogue of the \cite[Theorem 4.8]{Miao2} 
    \begin{thm}
        Let $C>0.$ Then the following statements are equivalent in a locally compact group $G.$
        \begin{enumerate}[(i)]
            \item $G$ is weakly amenable with constant $C.$
            \item There exists a net $(u_\alpha)_\alpha$ consisting of compactly supported elements of $A(G)\otimes A(G)\otimes\ldots\otimes A(G)$ such that $\|u_\alpha\|_{\mbox{cbm}}\leq C\ \forall\ \alpha$ and $u_\alpha u\rightarrow u$ in $A^n(G)$ for all $u\in A^n(G).$
            \item There exists a net $(u_\alpha)_\alpha$ consisting of compactly supported elements of $M^{\mbox{cb}}(A^n(G))$ such that $\|u_\alpha\|_{\mbox{cbm}}\leq C\ \forall\ \alpha$ and $u_\alpha u\rightarrow u$ in $A^n(G)$ $\forall\ u\in A^n(G).$
        \end{enumerate}
    \end{thm}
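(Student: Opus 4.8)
The plan is to run the cycle $(i)\Rightarrow(ii)\Rightarrow(iii)\Rightarrow(i)$, with the first and last implications carrying the genuine content and the middle one being essentially an inclusion. For $(i)\Rightarrow(ii)$, weak amenability with constant $C$ supplies a net $(\phi_\alpha)_\alpha$ of compactly supported elements of $A(G)$ with $\|\phi_\alpha\|_{cbm}\le C$ and $\phi_\alpha\to 1$ uniformly on compacta; by the standard De Canni\`ere--Haagerup argument this net is then a norm approximate identity for $A(G)$. I would set $u_\alpha=\phi_\alpha\otimes\cdots\otimes\phi_\alpha$, which is compactly supported and lies in $A(G)\otimes\cdots\otimes A(G)$. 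On an elementary tensor one has $u_\alpha(\psi_1\otimes\cdots\otimes\psi_n)=(\phi_\alpha\psi_1)\otimes\cdots\otimes(\phi_\alpha\psi_n)\to\psi_1\otimes\cdots\otimes\psi_n$, and a uniform bound on $\|m_{u_\alpha}\|_{cb}$ coming from Proposition \ref{Mp}(i) lets this convergence pass to all of $A^n(G)$ by density of the algebraic tensor product.

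The implication $(ii)\Rightarrow(iii)$ is immediate: since $A(G)\hookrightarrow M^{cb}A(G)$ contractively, Proposition \ref{Mp}(i) gives $A(G)\otimes\cdots\otimes A(G)\subseteq M^{cb}A^n(G)$ without increasing the relevant norm, so the net produced in $(ii)$ already witnesses $(iii)$. For $(iii)\Rightarrow(i)$ I would use the completely contractive homomorphism $S=id\otimes\xi_e\otimes\cdots\otimes\xi_e:A^n(G)\to A(G)$, with $\xi_e$ evaluation at the identity exactly as in Theorem \ref{ABAI} (equivalently, restriction to the diagonal subgroup). The key point is that $S$ carries $M^{cb}A^n(G)$ into $M^{cb}A(G)$ without increasing the cb-multiplier norm: $\ker S$ is a closed ideal, hence invariant under the multiplier action, so $w\mapsto S(wb)$ descends to a well-defined cb-multiplier $S(w)$ of $A(G)$ satisfying $S(w)S(b)=S(wb)$. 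Slicing preserves compact support, and since $S(u_\alpha u)=S(u_\alpha)S(u)$, choosing $u=v\otimes w\otimes\cdots\otimes w$ with $w(e)=1$ turns $u_\alpha u\to u$ into $S(u_\alpha)v\to v$ for every $v\in A(G)$. Thus $(S(u_\alpha))_\alpha$ is a compactly supported approximate identity for $A(G)$ bounded by $C$ in $\|\cdot\|_{cbm}$, which is exactly weak amenability with constant $C$.

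The step I expect to be the main obstacle is the precise bookkeeping of the constant. The forward construction via tensor powers, combined with the submultiplicativity of the cb-multiplier norm established in Proposition \ref{Mp}(i), only yields $\|u_\alpha\|_{cbm}\le C^n$, whereas the slice map in the reverse direction returns the bound $C$ cleanly. Reconciling these into the single constant $C$ asserted in all three statements is the delicate matter: one must either engineer a sharper forward construction that realizes an approximate identity of compactly supported tensors without the multiplicative loss, or argue that the optimal constants on the two sides genuinely coincide. A second, more technical point is the contractivity of $S$ on cb-multipliers; I expect this to follow from the description of $M^{cb}A^n(G)$ via normal completely bounded module maps and the identification of $S$ with the corner coming from the inclusion $VN(G)\hookrightarrow VN^n(G)$, but it should be verified carefully rather than assumed.
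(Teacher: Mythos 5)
Your cycle $(i)\Rightarrow(ii)\Rightarrow(iii)\Rightarrow(i)$ and all three constructions coincide with the paper's: tensor powers $\phi_\alpha\otimes\cdots\otimes\phi_\alpha$ for the first implication, the inclusion from Proposition \ref{Mp} for the second, and the slice $f_\alpha(t)=u_\alpha(t,e,\ldots,e)$ (your $S(u_\alpha)$, after invoking regularity of $A^n(G)$ to place compactly supported multipliers inside $A^n(G)$) for the third. Two points of genuine divergence are worth recording. First, your treatment of $(iii)\Rightarrow(i)$ is actually cleaner than the paper's on the convergence side: since $S=id\otimes\xi_e\otimes\cdots\otimes\xi_e$ is a contractive algebra homomorphism, $u_\alpha u\to u$ in norm immediately gives $S(u_\alpha)v\to v$ in norm for all $v\in A(G)$, whereas the paper only extracts weak convergence from the dual maps and must then pass to convex combinations via Mazur before applying Wichmann's theorem. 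Second, the cb-bound $\|S(u_\alpha)\|_{cbm}\le C$ that you defer is precisely the technical heart of the paper's argument: it is obtained by exhibiting multiplication by $f_\alpha$ on $VN(G)$ as $m\circ R_{u_\alpha}\circ\Phi$, where $\Phi(T)=T\otimes I\otimes\cdots\otimes I$ and $m$ is the weak$^*$-continuous completely contractive multiplication map $VN^n(G)\to VN(G)$ with $m\circ\Phi=id$; this is exactly the ``corner'' identification you anticipate, so your sketch completes along the paper's lines. Finally, the constant bookkeeping you flag in $(i)\Rightarrow(ii)$ is a real issue: the tensor-power construction only yields $\|u_\alpha\|_{cbm}\le C^n$, and the paper's proof silently omits any verification of the asserted bound $C$, so the equivalence as literally stated (with the same $C$ in all three items) is not established there either --- your observation identifies a gap in the paper's own argument rather than a defect of your approach.
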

    \begin{proof}
        (i) $\implies$ (ii) Let $G$ be weakly amenable, then there exist $C > 0$ and a net $(\phi_\alpha)_\alpha \subseteq A(G)$ of compactly supported elements such that $\| \phi_\alpha\|_{cbm} \leq C\ \forall\ \alpha $ with $\phi_\alpha \phi \rightarrow \phi$ in $A(G)$ $\forall\ \phi \in A(G).$
        
        Consider $u_\alpha = \underbrace{ \phi_\alpha \otimes \ldots \otimes \phi_\alpha}_n$ and if $\psi_1, \psi_2,..., \psi_n \in A(G),$ then $$u_\alpha(\psi_1 \otimes \ldots \otimes \psi_n) \rightarrow_\alpha \psi_1 \otimes \ldots \otimes \psi_n$$ in $A^n(G)$. Let $u \in A^n(G)$ be arbitrary and $\epsilon > 0.$ Fix an element $u_0 \in \underbrace{A(G) \odot \ldots \odot A(G)}_n$ so that $\|u-u_0\|_{eh} \leq \epsilon/3.$ Also there exists $\alpha_0$ with the property that $\|u_\alpha u_0-u_0\|_{eh} < \epsilon/3\ \forall\ \alpha \geq \alpha_0.$ Now, for any $\alpha \geq \alpha_0,$ $$\| u_\alpha u- u\|_{eh} \leq \|u_\alpha u- u_\alpha u_0\|_{eh}+\|u_\alpha u_0- u_0\|_{eh}+\|u_0-u\|_{eh} < \epsilon.$$
        (ii) $\implies$ (iii) It follows immediately from Proposition $\ref{Mp}$ and the fact that $A(G) \subseteq M^{cb}A(G).$\\
        (iii) $\implies$ (i) Let $(v_\alpha)$ be a net of compactly supported elements of $M^{cb}(A^n(G))$ such that $$\|v_\alpha\|_{cbm} \leq C\ \forall\ \alpha \text{ and } v_\alpha v \rightarrow v  \text{ in } A^n(G).$$ By \cite{TT}, $A^n(G)$ is a regular semisimple completely contractive Banach algebra with pointwise multiplication whose Gelfand spectrum is homomorphic to $\underbrace{G \times G\times \cdots \times G}_n$, which implies $(v_\alpha) \subseteq A^n(G).$ \\
        Let $m_u: A^n(G) \rightarrow A^n(G)$ be defined as $m_u(v)= uv\ \forall\ v \in A^n(G),$ and $R_u$ be its dual map from $VN^n(G)$ to $VN^n(G)$. Also, $\|R_u\|_{cb} = \|u\|$ and $\|R_{u_\alpha}\|_{cb} \leq C\ \forall\ \alpha$. Hence, $R_{u_\alpha}(T) \rightarrow T\ \forall\ T \in VN^n(G)$ in weak$^*$ topology of $VN^n(G)$. \\
        Now, define $$\Phi: VN(G) \rightarrow VN^n(G)$$  $$\Phi(T)= T\otimes I \otimes \ldots \otimes I$$ and observe that $\Phi$ is weak$^*$-continuous as well as completely contractive. In fact it is the dual of $(\text{id}\otimes \delta_e \otimes \delta_e \otimes \ldots \delta_e) .$ One can also note that the multiplication map $m:T_1 \otimes T_2 \otimes \ldots \otimes T_n \rightarrow T_1T_2\ldots T_n$ extends uniquely to a weak$^*$-continuous completely contractive map from $VN^n(G)$ to $VN(G)$ and $$m\circ \Phi(T)= m(T \otimes I \otimes \ldots \otimes I)=T\ \forall\ T\in VN(G).$$ Thus, we have $(m\circ R_{u_\alpha} \circ \Phi )_\alpha $ as a net of weak$^*$ continuous maps on $VN(G)$ whose completely bounded norm is uniformly bounded by $C$. Also, $(m\circ R_{u_\alpha} \circ \Phi )_\alpha(\lambda_t)=u_\alpha(t,e,e,\ldots,e)\lambda_t\ \forall\ t\in G.$\\
        Let $f_\alpha: G\rightarrow \mathbb{C}$ be a function defined as $f_\alpha(t)=u_\alpha(t,e,e,\ldots,e)$ and assume that supp $u_\alpha \subseteq \underbrace{K_\alpha \times K_\alpha \times \cdots \times K_\alpha}_n$ for some compact set $K_\alpha \subseteq G$.\\ Let $g_\alpha \in A(G)$ be a function with compact support taking value $1$ on $K_\alpha$ and $e$. Then, $$\underbrace{g_\alpha \otimes \ldots \otimes g_\alpha}_n \in A^n(G) \text{ as well as } u_\alpha(\underbrace{g_\alpha \otimes \ldots \otimes g_\alpha}_n) \in A^n(G).$$ Now, it can be seen that $$f_\alpha = (\text{id} \otimes \delta_e \otimes \ldots \otimes \delta_e)(u_\alpha(g_\alpha \otimes \ldots \otimes g_\alpha)) \in A(G).$$
        Also, for any $T \in VN(G)$ and $f\in A(G)$, we have
        \begin{align*}
        \langle f_\alpha f-f,T\rangle =&\langle f, f_\alpha.T -T\rangle\\=& \langle f,(m\circ R_{u_\alpha} \circ \Phi )(T)-T\rangle\\=& \langle m_*(f), (R_{u_\alpha}-\text{id}) \circ \Phi(T)\rangle \rightarrow_\alpha 0.
        \end{align*}
        Thus, $f_{\alpha} f\rightarrow f$ in weak topology of $A(G)$. Fix $0\neq f \in A(G)$. Further, it is known that norm closure and weak closure are equal for convex sets, then from the above $\exists$ a net $(f^{'}_{\beta})_{\beta}$ in $A(G)$(depending upon $f$) such that $$\underset{\beta}{\text{sup}} \|f^{'}_{\beta}\|_{cbm} \leq C \text{ and }\|f^{'}_{\beta} f-f\|_{cbm} \leq \|f^{'}{_\beta} f-f\|_{A(G)}\rightarrow_{\beta} 0.$$ It implies that $(A(G),\|.\|_{cbm})$ has an approximate unit bounded by $C$ in the completely bounded norm. Now, the weak amenability of $G$ follows from \cite[Theorem 1]{Wic}. 
        \end{proof}

\end{document}